\documentclass[reqno, 11pt]{amsart}

\usepackage{hyperref}
\usepackage{graphicx}
\usepackage[latin1]{inputenc}
\usepackage[english,activeacute]{babel}
\usepackage{xcolor}

\numberwithin{equation}{section}
\newtheorem{theorem}{Theorem}[section]
\newtheorem{lemma}[theorem]{Lemma}

\newtheorem{proposition}[theorem]{Proposition}
\newtheorem{corollary}[theorem]{Corollary}

\title[Standing waves for NLS on graphs]
{ Orbital stability of standing waves for supercritical NLS with potential on graphs}

\email{ardila@impa.br}

\subjclass[2010]{Primary 76B25, 35Q51, 35Q55, 35R02}
\keywords{Non-linear Schr\"odinger equation; quantum graphs; standing waves; stability}

\begin{document}
\maketitle

\centerline{\scshape Alex H. Ardila}
\medskip
{\footnotesize
 \centerline{ICEx, Universidade Federal de Minas Gerais, Av. Antonio Carlos, 6627}
\centerline{Caixa Postal 702, 30123-970, Belo Horizonte-MG, Brazil}
}

\begin{abstract}
In this paper we study the existence and stability of normalized standing waves for the nonlinear Schr\"odinger equation on a general starlike graph with potentials. Under general assumptions on the graph and the potential, we show the existence of orbitally stable standing waves  when the nonlinearity is $L^{2}$-critical and supercritical. 

\end{abstract}

\section{Introduction}
In this paper we are interested in the existence and stability of standing waves with prescribed $L^{2}$-norm  
for the following nonlinear Schr\"odinger equation on a  metric graph $\Gamma$:
 \begin{equation}
\label{00NL}
 i\partial_{t}u=Hu-|u|^{p-1}u, \quad x\in \Gamma.
\end{equation}
We recall that the nonlinearity in \eqref{00NL} is understood componentwise. {In this paper we are interested in the $L^{2}$-critical and supercritical cases; so we restrict our discussion to the cases where $p\geq 5$.}

Equation \eqref{00NL} models propagation through junctions in networks (see \cite{QGAH, PDEOM, User}). 
The study of nonlinear propagation on graphs/networks is a topic of active research in several
branches of pure and applied science. Modern applications of partial differential equations on networks include nonlinear electromagnetic
pulse propagation in optical fibers, the hydrodynamic, biology, etc; see, e.g., \cite{PDEOM} and the references therein.

Recently numerous results on existence and stability of standing
waves,  local well-posedness of initial value problem, blow up and scattering results for nonlinear Schr\"odinger equation on a metric
graph were obtained. Among such works, let us mention \cite{AA2017, CaFiNo2017, Cacci2017, FSOH, OSCE, FNCN, ADAM, Adaminew, NQTG, AQFF,ADN1, ADNP,AESM, DSED, NNOOA}. In particular, the NLS on the real line with a point interaction  (which can be understood as a metric graph with only two edges) has been also studied substantially in the literature (see \cite{AHdelta,RJJ,FO,LFF}). We refer to \cite{ASPT2} for further information and bibliography.

Let $\Gamma$ be a connected finite metric graph, by $V$ we denote the set of its vertices, and by
$J$ we denote the set of its edges. We will assume $\Gamma$ has at least one external edge, so that $\Gamma$ is noncompact. If an edge $e\in J$ emanates from a vertex $v\in V$, then we will write this as follows: $e\prec v$. 
The differential operator under consideration in this paper  is the Schr\"odinger operator $H$
in $L^{2}(\Gamma)$ equipped with delta conditions concentrated at all vertices $v\in V$ of the graph:
\begin{align}\label{NN1}
(Hu)_{e}&=-u^{\prime\prime}_{e}+W_{e}u_{e}            \\ \label{NN2}
\text{dom}(H)&=\Big\{   u\in H^{2}(\Gamma): \sum_{e\prec v}\partial_{o}u_{e}(v)=-\alpha_{v}u_{e}(v)\quad \text{for all $v\in V$}\Big\}, 
\end{align} 
where { $\alpha_{v}$ are real constants associated with the delta potentials concentrated at vertices $v\in V$. 
Here, as elsewhere, the Sobolev space $H^{1}(\Gamma)$ is defined as the space of continuous functions on $\Gamma$ that belong to $H^{1}(I_{e})$ on each edge, i.e.,
\begin{equation*}
H^{1}(\Gamma)= \left\{u\in C(\Gamma): u_{e}\in H^{1}(I_{e})\,\, \text{for all $e\in J$}\right\}, 
\end{equation*}
where $C(\Gamma)$ is the set of continuous functions on $\Gamma$, and the corresponding norm defined by
\begin{equation*}
\left\|u\right\|^{2}_{H^{1}}=\sum_{e\in J}\|u_{e}\|^{2}_{H^{1}},
\end{equation*}
and  $H^{2}(\Gamma)$ denotes the Sobolev space 
\begin{equation*}
H^{2}(\Gamma)=\left\{u\in H^{1}(\Gamma): u_{e}\in H^{2}(I_{e}), \,\, \text{for all $e\in J$}\right\}.
\end{equation*}
Finally, we denote by $\partial_{o}u_{e}(v)$ the outward derivative of $u$ at $v$ along the edge $e$.}

Formally, the NLS \eqref{00NL} has the following conserved quantity, 
\begin{equation*}
E(u)=\frac{1}{2}\int_{\Gamma}|u^{\prime}|^{2}dx+\frac{1}{2}\int_{\Gamma}W(x)|u|^{2}dx-\frac{1}{2}\sum_{v\in V}\alpha_{v}|u(v)|^{2}-\frac{1}{p+1}\int_{\Gamma}|u|^{p+1}dx.
\end{equation*}
The potential $W(x)$ can be thought of as modeling inhomogeneities in the medium. 

Following \cite{Cacci2017, CaFiNo2017}, for our analysis we make  the following assumptions about the metric  graph $\Gamma$ and the Schr\"odinger operator $H$.\\
\noindent\textbf{Assumption 1.} $\Gamma$ is a finite, connected metric graph, with at least one external edge.\\
We recall that a metric graph is a graph $\Gamma$ equipped with a function $L: J\rightarrow (0,+\infty]$  such that each edge $e\in J$  is identified with a finite segment $[0, L_{e}]$ of positive length $L_{e}$ or an infinite segment $[0, L_{e})$ with $L_{e}=+\infty$. 
Naturally we have the decomposition $J=J^{int}\cup J^{ex}$, where the set $J^{ex}$ denotes the
set of external edges of $\Gamma$ and the set $J^{int}$ denotes the set of internal edges.

\noindent\textbf{Assumption 2.} { $W=W_{+}-W_{-}$ with $W_{\pm}\geq 0$,  $W_{+}\in L^{1}(\Gamma)+L^{\infty}(\Gamma)$, and $W_{-}\in L^{r}(\Gamma)$ for some $r\in [1, 1+2/(p-1)]$}.\\
We remark that assumption 2  implies that the operator $H$ on the graph $\Gamma$ admits a precise interpretation as self-adjoint operator
on $L^{2}(\Gamma)$ (see \cite[Remark 2.1]{CaFiNo2017} for more details). Denote by $\mathfrak{F}[u]$  the quadratic form associated with the operator $H$,
\begin{equation*}
\mathfrak{F_{}}[u]:=\int_{\Gamma}|u^{\prime}|^{2}dx+\int_{\Gamma}W(x)|u|^{2}dx-\sum_{v\in V}\alpha_{v}|u(v)|^{2},
\end{equation*}
defined on the domain $\mathrm{dom}(\mathfrak{F})=H^{1}(\Gamma)$. Let
\begin{equation*}
-\lambda_{0}:=\text{inf}\left\{\mathfrak{F}[u]: u\in H^{1}(\Gamma), \|u\|^{2}_{L^{2}}=1\right\}.
\end{equation*}
\noindent\textbf{Assumption 3.} $\lambda_{0}>0$ and it is an isolated eigenvalue.

Notice that these assumptions are satisfied in many interesting cases;  see introduction in \cite{CaFiNo2017} for more details. Our work is motivated by the recent papers \cite{Cacci2017, CaFiNo2017},  where the orbital stability of standing waves of \eqref{00NL} on a general starlike graph with potentials is considered, with a special focus on the $L^{2}$-subcritical and critical case.

Local well-posedness of the Cauchy problem for \eqref{00NL} in the energy space $H^{1}(\Gamma)$ is established in Cacciapuoti et al.\cite[Propositions 2.3 and 2.8]{CaFiNo2017} for any $p>1$.
\begin{proposition} \label{PCS}
If Assumptions 1-2 hold true, for any $u_{0}\in H^{1}(\Gamma)$, there exist $T=T(u_{0})>0$ and a unique maximal solution 
$u\in C([0,T),H^{1}(\Gamma))$ of \eqref{00NL} with  $u(0)=u_{0}$ such that the following ``blow up alternative'' holds: either $T=\infty$
	or $T<\infty$ and $\mathrm{lim}_{t\rightarrow T}\|u(t)\|_{H^{1}}=\infty$. Furthermore, the conservation of energy and charge hold; that is, 
\begin{equation*}
E(u(t))=E(u_{0})\quad  and \quad \left\|u(t)\right\|^{2}_{L^{2}}=\left\|u_{0}\right\|^{2}_{L^{2}}\quad  \text{for all $t\in [0,T)$}.
\end{equation*}
\end{proposition}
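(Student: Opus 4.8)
The plan is to establish local well-posedness via a standard fixed-point (contraction mapping) argument applied to the Duhamel integral formulation of the equation, and then to derive the conservation laws. Since the author attributes this result to Cacciapuoti et al., I would follow the classical Cazenave-type scheme adapted to the graph setting.

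\medskip

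First I would pass to the integral (Duhamel) formulation: a solution of \eqref{00NL} with $u(0)=u_0$ satisfies
\begin{equation*}
u(t)=e^{-itH}u_{0}+i\int_{0}^{t}e^{-i(t-s)H}\big(|u(s)|^{p-1}u(s)\big)\,ds.
\end{equation*}
The key functional-analytic input is that, under Assumptions 1--2, the operator $H$ is self-adjoint on $L^{2}(\Gamma)$, so by Stone's theorem $e^{-itH}$ is a strongly continuous unitary group on $L^{2}(\Gamma)$; moreover, since the quadratic form $\mathfrak{F}$ is bounded below on $H^{1}(\Gamma)$, the group leaves $H^{1}(\Gamma)$ invariant and acts as a bounded (indeed form-isometric up to the spectral shift) operator there. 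I would then set up the contraction on a ball in $C([0,T],H^{1}(\Gamma))$, using that $H^{1}(\Gamma)$ embeds continuously into $L^{\infty}(\Gamma)$ on each edge (one-dimensional Sobolev embedding, componentwise) so that the nonlinear map $u\mapsto|u|^{p-1}u$ is locally Lipschitz from $H^{1}(\Gamma)$ into itself. Choosing $T=T(\|u_0\|_{H^1})$ small enough makes the Duhamel map a contraction, yielding a unique local solution; uniqueness and continuous dependence follow from the same Lipschitz estimate via Gronwall.

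\medskip

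Next I would extend the solution to a maximal interval $[0,T^{*})$ by a standard iteration argument and prove the blow-up alternative: if $T^{*}<\infty$ then $\|u(t)\|_{H^{1}}$ must diverge as $t\to T^{*}$, since otherwise the local existence time would be bounded below uniformly near $T^{*}$, allowing continuation past $T^{*}$ and contradicting maximality. For the conservation laws I would first derive them formally by pairing the equation with $\bar u$ (for the charge $\|u\|_{L^2}^2$, taking imaginary parts so the self-adjoint term drops out) and with $\partial_t\bar u$ (for the energy $E(u)$), and then justify the computation rigorously by a regularization/approximation procedure, since the $H^{1}$ solution is not a priori regular enough for the direct manipulation.

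\medskip

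The main obstacle is handling the potential $W$ and the delta vertex conditions carefully enough to control the nonlinear term and the conserved quantities. Because $W_{-}\in L^{r}(\Gamma)$ with $r$ in a range tied to the scaling exponent $p$, and $W_{+}$ is only in $L^{1}+L^{\infty}$, verifying that the energy $E$ is well defined and finite on $H^{1}(\Gamma)$ and that the approximation argument for conservation passes to the limit requires the interpolation/Sobolev estimates encoded in Assumption 2; this is precisely where the restriction on $r$ is used. Since the $H^{1}(\Gamma)$-to-$H^{1}(\Gamma)$ local Lipschitz bound for the nonlinearity relies only on the one-dimensional embedding and is insensitive to the value of $p>1$, the local-in-time theory itself is uniform in $p$, which is why the statement holds for all $p>1$ even though the remainder of the paper specializes to $p\geq 5$.
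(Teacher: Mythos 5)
Your proposal is essentially correct, but note that the paper itself contains no proof of this proposition: it is imported verbatim by citation to \cite[Propositions 2.3 and 2.8]{CaFiNo2017}, and your Cazenave-type scheme --- Duhamel formulation, contraction mapping using that $e^{-itH}$ is bounded on the form domain $H^{1}(\Gamma)$ (equivalence of the form norm and the $H^{1}$ norm is exactly Lemma \ref{L1} ii), together with the one-dimensional embedding $H^{1}(\Gamma)\hookrightarrow L^{\infty}(\Gamma)$, then the standard continuation argument for the blow-up alternative and a regularization argument for the conservation laws --- is precisely the route taken in that reference, with no Strichartz machinery needed in this one-dimensional setting. One small inaccuracy in your final paragraph: for $1<p<2$ the map $u\mapsto|u|^{p-1}u$ is \emph{not} locally Lipschitz from $H^{1}(\Gamma)$ into itself, since the derivative of $z\mapsto|z|^{p-1}z$ is only H\"older continuous of order $p-1$, so the claim that the $H^{1}$-to-$H^{1}$ Lipschitz bound is ``insensitive to the value of $p>1$'' fails in that range; the standard repair (Kato's trick: run the contraction in the $C([0,T];L^{2}(\Gamma))$ metric on a closed ball of $L^{\infty}([0,T];H^{1}(\Gamma))$, which is complete for that metric) restores the result for all $p>1$, and in any case the issue is vacuous for the regime $p\geq 5$ actually used in this paper.
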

If $1<p<5$, the global well-posedness of the Cauchy problem for \eqref{00NL} holds in $H^{1}(\Gamma)$ by Gagliardo-Nirenberg estimates, conservation of the $L^{2}$-norm and energy; see \cite[Theorem 3]{CaFiNo2017} for more details.

{
We recall the notion of stability of a set $\mathcal{M}\subset H^{1}(\Gamma)$ (see \cite[Chapter 8]{CB} for review of this theory). 
For $\mathcal{M}\subset H^{1}(\Gamma)$, we say that the set $\mathcal{M}$ is $H^{1}(\Gamma)$-stable with respect to NLS \eqref{00NL} if for arbitrary $\epsilon>0$ there exists $\delta>0$ such that if $u_{0}\in H^{1}(\Gamma)$  satisfies $$\inf_{\varphi\in \mathcal{M}}\|u_{0}-\varphi\|<\delta,$$ then
\begin{equation*}
\sup_{t\in  \mathbb{R}}\inf_{\varphi\in \mathcal{M}}\|u(t)-\varphi\|_{H^{1}}<\epsilon,
\end{equation*}
where $u(t)$ is a solution to the Cauchy problem of \eqref{00NL} with initial datum $u_{0}$.} One natural idea to construct orbitally stable standing waves solutions with prescribed mass for \eqref{00NL} is to consider the following minimization problem
\begin{equation}\label{P1}
\nu_{c}={\inf}\left\{E(u): u\in H^{1}(\Gamma), {u\in S(c)}\right\},
\end{equation}
where
\begin{equation*}
S(c)=\left\{u\in H^{1}(\Gamma): \|u\|^{2}_{L^{2}}=c\right\}.
\end{equation*}

Now if Assumptions 1-3 hold true, then  in the subcritical case $1<p<5$, the energy functional $E(u)$ is bounded from below and $\nu_{c}>-\infty$ for every $c>0$. Furthermore, there exists $c^{\ast}>0$  small enough such that for $0<c<c^{\ast}$ any minimizing sequence for problem \eqref{P1} is precompact in $H^{1}(\Gamma)$. In particular, the set $\mathcal{M}_{c}:=\left\{\varphi \in H^{1}(\Gamma): \|\varphi\|^{2}_{L^{2}}=c, \nu_{c}=E(\varphi)\right\}$ is   $H^{1}(\Gamma)$-stable  with respect to NLS \eqref{00NL}; see \cite[Theorem 1]{CaFiNo2017}  for more details. An analogous result can be proven for the critical case $p=5$  (see \cite[Theorem 2]{Cacci2017}). Other results in this direction, for the NLS
on graphs without confining potentials, were obtained in \cite{ADAM, Adaminew}.

On the other hand, suppose that $\Gamma$ is a star-graph consisting of a central vertex $v_{0}$ and $N$ edges (half-lines) attached to it. If one assumes that $p>5$, then $\nu_{c}=-\infty$. Indeed, first note that there exist constants $a>0$ and $b>0$ such that 
(see Lemma \ref{L1} ii) below)
\begin{equation*}
E(u)\leq a\int_{\Gamma}|u^{\prime}|^{2}dx+b\int_{\Gamma}|u|^{2}dx-\frac{1}{p+1}\int_{\Gamma}|u|^{p+1}dx.
\end{equation*}
Next, if we fix $\phi \in S(c)$ and define $\phi_{\lambda}(x)=\lambda^{\frac{1}{2}}\phi (\lambda x)$, then $\|\phi_{\lambda}\|^{2}_{L^{2}}=\|\phi\|^{2}_{L^{2}}$, 
\begin{equation*}
E(u)\leq a\lambda^{2}\int_{\Gamma}|u^{\prime}|^{2}dx+b\int_{\Gamma}|u|^{2}dx-\frac{\lambda^{\frac{(p-1)}{2}}}{p+1}\int_{\Gamma}|u|^{p+1}dx\rightarrow -\infty,
\end{equation*}
as $\lambda\rightarrow \infty$ and hence $\nu_{c}=-\infty$. For this reason, in the supercritical case $p>5$,  it is not convenient to consider  the  minimization problem \eqref{P1} to construct normalized solutions.

The purpose of this paper is to complement the existence and stability results of Cacciapuoti et al. \cite{CaFiNo2017, Cacci2017} by considering the supercritical case $p>5$. Following the ideas developed in \cite{Luo2018, BEBOJEVI2017}, we introduce a local minimization problem. Indeed, set 
\begin{equation}\label{n1}
B(r):=\left\{u\in H^{1}(\Gamma): \|u\|^{2}_{G}:=\mathfrak{F_{}}[u]+2\lambda_{0}\|u\|^{2}_{L^{2}}\leq r\right\}.
\end{equation}
If assumptions 2-3 hold true, then the energy functional $E$ restricted to $S(c)\cap B({r})$ is bounded from below (see Lemma \ref{L3} below). 
Thus for every $r>0$, we consider the following  local minimization problem on the metric graph $\Gamma$,
\begin{equation}\label{P2}
\nu^{r}_{c}={\inf}\left\{E(u): u\in H^{1}(\Gamma), {u\in S(c)\cap B({r})}\right\},
\end{equation}
and we denote the set of nontrivial minimizers of  \eqref{P2} by
\begin{equation*}
\mathcal{M}^{r}_{c}=\left\{\varphi\in H^{1}(\Gamma):\varphi \in S(c)\cap B({r}),\, \nu^{r}_{c}=E(\varphi)\right\}.
\end{equation*}

Now we are ready to state our first result.
\begin{theorem} \label{TT1}
Let $p\geq 5$ and assumptions 1-3 hold true. For every $r>0$, there exists $c^{\ast}=c^{\ast}(r)>0$ such that:\\
{i)}  $S(c)\cap B(r)\neq \emptyset$ and $\nu^{r}_{c}>-\infty$ for every $c<c^{\ast}$.\\
{ii)} For any $c<c^{\ast}$ there exists $u\in H^{1}(\Gamma)$ with $u\in  S(c)\cap B(r)$  such that  $E(u)=\nu^{r}_{c}$. In particular,  this implies that $\mathcal{M}^{r}_{c}$ is not the empty set.
\end{theorem}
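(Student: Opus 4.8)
\emph{Proof strategy.} I would prove the two parts in turn, extracting the threshold $c^{\ast}=c^{\ast}(r)$ from a short list of smallness conditions that surface along the way. For (i), non-emptiness comes from the ground state: by Assumption 3 the value $-\lambda_{0}$ is an isolated eigenvalue at the bottom of the spectrum of $H$, so there is $\psi_{0}\in H^{1}(\Gamma)$ with $\|\psi_{0}\|_{L^{2}}^{2}=1$ and $\mathfrak{F}[\psi_{0}]=-\lambda_{0}$. For $\phi_{c}:=\sqrt{c}\,\psi_{0}\in S(c)$ one has $\|\phi_{c}\|_{G}^{2}=\mathfrak{F}[\phi_{c}]+2\lambda_{0}c=\lambda_{0}c$, so $\phi_{c}\in B(r)$ whenever $\lambda_{0}c\le r$, giving $S(c)\cap B(r)\neq\emptyset$. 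The bound $\nu^{r}_{c}>-\infty$ is the boundedness from below of $E$ on $S(c)\cap B(r)$ already recorded in Lemma \ref{L3}: on $B(r)$ the coercivity of the $G$-norm (Lemma \ref{L1}) controls $\|u\|_{H^{1}}$, the Gagliardo--Nirenberg inequality on $\Gamma$ then bounds $\int_{\Gamma}|u|^{p+1}$, and $\mathfrak{F}[u]=\|u\|_{G}^{2}-2\lambda_{0}c\ge-2\lambda_{0}c$. I would also note $E(\phi_{c})=-\tfrac12\lambda_{0}c-\tfrac{1}{p+1}c^{(p+1)/2}\|\psi_{0}\|_{L^{p+1}}^{p+1}<0$, so that $\nu^{r}_{c}<0$; this strict negativity drives part (ii).

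For (ii) take a minimizing sequence $(u_{n})\subset S(c)\cap B(r)$ with $E(u_{n})\to\nu^{r}_{c}$, which (dropping finitely many terms) satisfies $E(u_{n})<0$. The membership $u_{n}\in B(r)$ and Lemma \ref{L1} give a uniform $H^{1}(\Gamma)$-bound, so along a subsequence $u_{n}\rightharpoonup u$ in $H^{1}(\Gamma)$, with $u_{n}\to u$ a.e.\ and locally in $L^{2}$, and weak lower semicontinuity of $\|\cdot\|_{G}^{2}$ yields $u\in B(r)$. Since $\Gamma$ is noncompact, the only way compactness can fail is that a portion of the $L^{2}$-mass runs off along an infinite edge; controlling this is the main obstacle. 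To track it, set $v_{n}:=u_{n}-u\rightharpoonup0$ and combine the Brezis--Lieb lemma with the splitting $\mathfrak{F}[u_{n}]=\mathfrak{F}[u]+\mathfrak{F}[v_{n}]+o(1)$ (the cross terms vanish because $v_{n}\rightharpoonup0$ in $H^{1}$ and $v_{n}(v)\to0$ at each vertex, Assumption 2 rendering the $W$-term harmless) to obtain
\begin{equation*}
E(u_{n})=E(u)+E(v_{n})+o(1),\qquad c=\|u\|_{L^{2}}^{2}+\|v_{n}\|_{L^{2}}^{2}+o(1),
\end{equation*}
and put $\mu:=\lim_{n}\|v_{n}\|_{L^{2}}^{2}$, so that $\|u\|_{L^{2}}^{2}=c-\mu$.

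The crux is to prove $\mu=0$. As $v_{n}$ leaves the compact core, its negative potential and vertex contributions vanish while $\int_{\Gamma}W_{+}|v_{n}|^{2}\ge0$, so $\mathfrak{F}[v_{n}]\ge\|v_{n}'\|_{L^{2}}^{2}-o(1)$ and $\|v_{n}'\|_{L^{2}}^{2}\le\|v_{n}\|_{G}^{2}+o(1)\le r+o(1)$. Inserting the Gagliardo--Nirenberg bound $\int_{\Gamma}|v_{n}|^{p+1}\le C\|v_{n}'\|_{L^{2}}^{(p-1)/2}\|v_{n}\|_{L^{2}}^{(p+3)/2}$ and using $\|v_{n}'\|_{L^{2}}^{(p-1)/2}\le r^{(p-5)/4}\|v_{n}'\|_{L^{2}}^{2}$ gives
\begin{equation*}
E(v_{n})\ge\tfrac12\|v_{n}'\|_{L^{2}}^{2}\Big(1-\tfrac{2C}{p+1}r^{(p-5)/4}\mu^{(p+3)/4}\Big)-o(1),
\end{equation*}
so that $\liminf_{n}E(v_{n})\ge0$ once $c$ (hence $\mu\le c$) is small enough to make the bracket nonnegative; this is the second condition on $c^{\ast}$. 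Consequently $E(u)=\nu^{r}_{c}-\lim_{n}E(v_{n})\le\nu^{r}_{c}<0$, so $u\neq0$ and $\mu<c$, which already excludes total vanishing.

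It remains to exclude $0<\mu<c$, and here I use that the weak limit lies strictly inside $B(r)$. From $E(u)<0$ one gets $\mathfrak{F}[u]<\tfrac{2}{p+1}\int_{\Gamma}|u|^{p+1}$, and the same Gagliardo--Nirenberg estimate (now with mass $c-\mu<c$) bounds $\|u\|_{G}^{2}\le 2\lambda_{0}(c-\mu)(1+o(1))$; shrinking $c^{\ast}$ once more makes $\theta^{2}\|u\|_{G}^{2}<r$ for $\theta:=\sqrt{c/(c-\mu)}>1$, so $\theta u\in S(c)\cap B(r)$. Since $g(\theta):=E(\theta u)=\tfrac{\theta^{2}}{2}\mathfrak{F}[u]-\tfrac{\theta^{p+1}}{p+1}\int_{\Gamma}|u|^{p+1}$ has $g'(\theta)=\theta\big(\mathfrak{F}[u]-\theta^{p-1}\!\int_{\Gamma}|u|^{p+1}\big)<0$ for $\theta\ge1$ (using $\mathfrak{F}[u]<\int_{\Gamma}|u|^{p+1}$), we obtain $E(\theta u)<E(u)\le\nu^{r}_{c}$, contradicting the definition of $\nu^{r}_{c}$. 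Hence $\mu=0$: then $u_{n}\to u$ in $L^{2}(\Gamma)$, so $u\in S(c)$, $\int_{\Gamma}|u_{n}|^{p+1}\to\int_{\Gamma}|u|^{p+1}$, and weak lower semicontinuity of $\mathfrak{F}$ gives $E(u)\le\liminf_{n}E(u_{n})=\nu^{r}_{c}$; the reverse inequality holds because $u\in S(c)\cap B(r)$, so $E(u)=\nu^{r}_{c}$ and $u\in\mathcal{M}^{r}_{c}$ (convergence of norms then upgrades $u_{n}\to u$ to strong $H^{1}$ convergence). The one genuinely delicate point is the escaping mass: the role of the smallness of $c^{\ast}$ is precisely to force the escaping energy to be nonnegative and the weak limit to be interior to $B(r)$, which is what makes the concluding scaling step legitimate.
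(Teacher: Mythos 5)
Your proposal is correct, but part (ii) follows a genuinely different route from the paper's. Part (i) is the same in both (the trial function $\sqrt{c}\,\psi_{0}$ and the Gagliardo--Nirenberg coercivity of Lemma \ref{L3}). For part (ii), the paper invokes the concentration-compactness trichotomy for starlike structures from \cite{CaFiNo2017}: vanishing is excluded via Lemma \ref{L5}, i.e.\ via the strict bound $\nu^{r}_{c}<-\tfrac{\lambda_{0}}{2}c$; dichotomy is excluded by citing the argument of \cite[Theorem 1]{CaFiNo2017}; and the graph-specific ``runaway'' alternative (all mass escaping along one external edge) is excluded by playing that strict bound against the estimate $\|u_{n}\|^{p+1}_{L^{p+1}}\leq Kr^{(p-1)/4}c^{(p+3)/4}$ for small $c$. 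You instead run a self-contained Brezis--Lieb/profile-splitting argument in the style of \cite{BEBOJEVI2017}: writing $u_{n}=u+v_{n}$ with escaping mass $\mu$, smallness of $c$ forces $\liminf_{n}E(v_{n})\geq 0$, which handles vanishing and runaway in one stroke (if $\mu=c$ then $u=0$ and $0=E(u)\leq\nu^{r}_{c}<0$, a contradiction), while the strict subadditivity needed to kill $0<\mu<c$ comes from the rescaling $\theta u$ with $\theta^{2}=c/(c-\mu)$ together with the monotonicity $E(\theta u)<E(u)$, valid since $E(u)<0$ gives $\mathfrak{F}[u]<\tfrac{2}{p+1}\|u\|^{p+1}_{L^{p+1}}\leq\|u\|^{p+1}_{L^{p+1}}$. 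The admissibility check $\theta u\in B(r)$ is the delicate point and you handle it correctly: the cancellation of $(c-\mu)$ in $\theta^{2}\|u\|^{2}_{G}\leq 2\lambda_{0}c\,(1+o(1))<r$ makes the step uniform in $\mu$. What each approach buys: yours is elementary and replaces the paper's wholesale appeal to the L\'evy-concentration machinery (in particular the dichotomy exclusion imported from \cite{CaFiNo2017}) by an explicit scaling computation, at the price of some asserted-but-unproved limits --- that $v_{n}\rightharpoonup 0$ makes the vertex terms and $\int_{\Gamma}W_{-}|v_{n}|^{2}$ vanish. These are true, but they are exactly where Assumption 2 and the graph structure enter (compact embedding $H^{1}\hookrightarrow C$ on the compact core, H\"older with $W_{-}\in L^{r}$ on the tails), and they are the same ingredients the paper deploys in its runaway analysis; once spelled out, your argument closes completely and produces the same threshold conditions on $c^{\ast}(r)$ (nonemptiness $\lambda_{0}c\leq r$, smallness in the Gagliardo--Nirenberg bracket, and interiority $2\lambda_{0}c<r$) that the paper extracts in Lemmas \ref{L3} and \ref{L4}.
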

The key ingredient in the proof of the above result is the concentration compactness method for starlike structures \cite{CaFiNo2017}. We remark that the same local minimization problem was exploited, in the case of NLS on bounded domains of $\mathbb{R}^{N}$, in \cite{NTV, PV2}.

Notice that Theorem \ref{TT1} implies that every minimizing sequence $\left\{u_{n}\right\}$ of $\nu^{r}_{c}$ is relativity compact in $H^{1}(\Gamma)$. We also note that for any $r_{1}$, $r_{2}>0$ and sufficiently small $c>0$, we have that $\mathcal{M}^{r_{1}}_{c}=\mathcal{M}^{r_{2}}_{c}$. Indeed, just to fix the ideas, assume that $r_{1}>r_{2}>0$. It is clear that $\nu^{r_{1}}_{c}\leq \nu^{r_{2}}_{c}$. Moreover, if $u\in S(c)\cap B(r_{1})$ and $E(u)=\nu^{r_{1}}_{c}$, then taking $c$ sufficiently small in Lemma \ref{L4} below, we get $u\in B(r_{2})$. Therefore, $\nu^{r_{1}}_{c}\geq \nu^{r_{2}}_{c}$,  which implies that $\nu^{r_{1}}_{c}= \nu^{r_{2}}_{c}$. In particular, since $\mathcal{M}^{r}_{c}\subset B(rc)$
 (see \eqref{E12}) for $c$ sufficiently small, it follows easily that $\mathcal{M}^{r_{1}}_{c}=\mathcal{M}^{r_{2}}_{c}$.

By a standing wave, we mean a solution of \eqref{00NL} with the form $u(x,t)=e^{i\omega t}\varphi(x)$, where $\omega>0$ and 
$\varphi(x)$ should satisfy the following elliptic equation 
\begin{equation}\label{EP}
H\varphi+\omega\varphi-|\varphi|^{p-1}\varphi=0. 
\end{equation}

In the following theorem we give some properties about the structure of  $\mathcal{M}^{r}_{c}$.
\begin{theorem} \label{TT2}
Let $p\geq 5$ and assumptions 1-3 hold true. Then for every fixed $r>0$ and $c<c^{\ast}(r)$ we have:\\
{i)}  For any $\varphi\in\mathcal{M}^{r}_{c}$, there exists $\omega\in  \mathbb{R}$ such that $u(x,t)=e^{i\omega t}\varphi(x)$ is a standing wave solution to NLS \eqref{00NL} with the estimates
\begin{equation*}
\lambda_{0}<\omega\leq\lambda_{0}(1+Kc^{\frac{p-1}{2}}).
\end{equation*}
where $K$ is a positive constant. Notice in particular that $\varphi$ is a solution of the stationary problem \eqref{EP}, and 
$\omega\rightarrow \lambda_{0}$ as $c\rightarrow 0$. \\
{ii)}Suppose that $W\leq 0$.  If $\varphi\in\mathcal{M}^{r}_{c}$, then there exists $\theta\in \mathbb{R}$ such that $\varphi(x)=e^{i\theta}\rho(x)$, where 
$\rho$ is a positive function on $\Gamma$.
\end{theorem}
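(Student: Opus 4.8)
I would treat the two parts separately, since part (i) is a Lagrange-multiplier / constrained-minimization argument while part (ii) is a rearrangement / positivity argument.

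For part (i), the plan is to use the fact (from Theorem \ref{TT1}) that any $\varphi\in\mathcal{M}^{r}_{c}$ minimizes $E$ on the constraint set $S(c)\cap B(r)$. First I would argue that for $c<c^{\ast}(r)$ the minimizer lies in the \emph{interior} of the ball $B(r)$, i.e. $\|\varphi\|_{G}^{2}<r$ strictly; this is precisely the content needed so that the $B(r)$-constraint is inactive and only the mass constraint $\|\varphi\|_{L^{2}}^{2}=c$ is binding. (This should follow from Lemma \ref{L4}, which the excerpt already invokes to show $\mathcal{M}_{c}^{r_1}=\mathcal{M}_{c}^{r_2}$ for small $c$, together with the inclusion $\mathcal{M}_{c}^{r}\subset B(rc)$ cited as \eqref{E12}.) Once the ball constraint is inactive, the Lagrange multiplier theorem applied to $E$ on $S(c)$ yields $\omega\in\mathbb{R}$ with $E'(\varphi)=-\omega\,G'(\varphi)$ in the weak sense, which is exactly the Euler--Lagrange equation \eqref{EP}, so $e^{i\omega t}\varphi$ solves \eqref{00NL}.

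The quantitative estimate $\lambda_{0}<\omega\le \lambda_{0}(1+Kc^{(p-1)/2})$ is the more delicate piece and I expect it to be the main obstacle. The strategy is to pair the Euler--Lagrange equation with $\varphi$ itself: testing \eqref{EP} against $\bar\varphi$ gives $\mathfrak{F}[\varphi]+\omega c=\int_{\Gamma}|\varphi|^{p+1}\,dx$, so that
\begin{equation*}
\omega=\frac{1}{c}\Big(\int_{\Gamma}|\varphi|^{p+1}\,dx-\mathfrak{F}[\varphi]\Big).
\end{equation*}
The lower bound $\omega>\lambda_{0}$ should come from the variational characterization $\mathfrak{F}[\varphi]\ge-\lambda_{0}\|\varphi\|_{L^{2}}^{2}=-\lambda_{0}c$ (definition of $-\lambda_0$) combined with the strict positivity of $\int|\varphi|^{p+1}$ for a nontrivial minimizer; the strictness rules out equality. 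For the upper bound I would control $\int_{\Gamma}|\varphi|^{p+1}$ by a Gagliardo--Nirenberg inequality on $\Gamma$, using $\|\varphi\|_{G}^{2}\le rc$ (from $\mathcal{M}_{c}^{r}\subset B(rc)$) to bound the relevant $H^{1}$-quantities by $c$, which produces a factor $c^{(p-1)/2}$ after inserting the mass constraint; collecting constants into $K$ then gives $\omega\le\lambda_{0}(1+Kc^{(p-1)/2})$, and the $c\to0$ limit is immediate. The care required here is in bookkeeping the dependence on $c$ and $r$ so the exponent $(p-1)/2$ emerges correctly.

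For part (ii), with $W\le 0$, the plan is a standard argument that a minimizer can be taken real and of one sign, then upgraded to strictly positive. First I would note that replacing $\varphi$ by $|\varphi|$ does not increase $E$: the diamagnetic-type inequality $\int_{\Gamma}||\varphi|'|^{2}\le\int_{\Gamma}|\varphi'|^{2}$ preserves the gradient term, the potential and delta terms depend only on $|\varphi|$, and the nonlinear term is unchanged, while $\||\varphi|\|_{L^{2}}=\|\varphi\|_{L^{2}}$ and $\||\varphi|\|_{G}\le\|\varphi\|_{G}$ keep us in $S(c)\cap B(r)$; hence $|\varphi|$ is also a minimizer and the gradient inequality is in fact an equality, forcing $\varphi=e^{i\theta}|\varphi|$ for a constant phase $\theta$ (constancy of the phase uses connectedness of $\Gamma$ together with continuity at the vertices, built into $H^{1}(\Gamma)$). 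It then remains to show $\rho:=|\varphi|$ is strictly positive rather than merely nonnegative: since $\rho\ge0$ solves \eqref{EP} (with $W\le0$ the operator $H+\omega$ admits a comparison/maximum principle edge-by-edge), a strong maximum principle on each edge together with the vertex coupling $\sum_{e\prec v}\partial_{o}\rho_{e}(v)=-\alpha_{v}\rho(v)$ propagates positivity across the graph, so $\rho>0$ everywhere. The potential obstacle in part (ii) is justifying strict positivity at and through the vertices, where the Kirchhoff-type $\delta$ condition, rather than a classical boundary condition, governs how positivity transfers between edges.
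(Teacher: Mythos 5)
Your part (i) has a genuine gap in the lower bound $\omega>\lambda_{0}$. From the identity you correctly derive by pairing \eqref{EP} with $\bar\varphi$, namely $\omega c=\|\varphi\|^{p+1}_{L^{p+1}}-\mathfrak{F}[\varphi]$, the variational characterization of $-\lambda_{0}$ gives $\mathfrak{F}[\varphi]\ge-\lambda_{0}c$, which is an inequality in the \emph{wrong direction}: it yields $\omega c\le\|\varphi\|^{p+1}_{L^{p+1}}+\lambda_{0}c$ (useful for the upper bound, where you do use it correctly), while strict positivity of the nonlinear term only gives $\omega c>-\mathfrak{F}[\varphi]$, and $-\mathfrak{F}[\varphi]$ may lie strictly below $\lambda_{0}c$ (indeed $\mathfrak{F}[\varphi]$ can be positive a priori). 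So the two ingredients you invoke cannot force $\omega>\lambda_{0}$. The missing ingredient is the strict level estimate coming from \emph{minimality}: testing $E$ with $\sqrt{c}\,\psi_{0}$, where $\psi_{0}$ is the positive ground state of Lemma \ref{L2}, gives $\nu^{r}_{c}<-\tfrac{\lambda_{0}}{2}c$ (this is \eqref{DE} in Lemma \ref{L5}), and the paper then writes $-\omega c=\mathfrak{F}[\varphi]-\|\varphi\|^{p+1}_{L^{p+1}}=2E(\varphi)-\tfrac{p-1}{p+1}\|\varphi\|^{p+1}_{L^{p+1}}<2E(\varphi)=2\nu^{r}_{c}<-\lambda_{0}c$, whence $\omega>\lambda_{0}$. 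The remainder of your part (i) matches the paper: interiority of minimizers via Lemma \ref{L4} (giving $\mathcal{M}^{r}_{c}\subset B(rc)$, so the ball constraint is inactive and the Lagrange multiplier exists), and your Gagliardo--Nirenberg bookkeeping with $\|\varphi\|^{2}_{G}\le rc$ and $\|\varphi\|^{2}_{L^{2}}=c$ does produce the exponent $c^{(p-1)/2}$ and the bound $\omega\le\lambda_{0}(1+Kc^{(p-1)/2})$, essentially as in the paper.

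In part (ii) your plan is the paper's plan (diamagnetic equality, a V\'azquez-type strong maximum principle applied edge by edge --- the paper cites \cite{LVL} with $B(s)=\omega s-s^{p}$ --- and propagation of positivity through vertices via the delta condition: if $\psi(v)=0$ then $\partial_{o}\psi_{e}(v)\ge0$ for each $e\prec v$ together with $\sum_{e\prec v}\partial_{o}\psi_{e}(v)=-\alpha_{v}\psi(v)=0$ forces $\psi_{e}(v)=\psi_{e}^{\prime}(v)=0$, hence $\psi_{e}\equiv0$ by the zero-extension argument, and connectedness kills $\psi$ globally, contradicting $\psi\in S(c)$). One ordering caveat: you deduce the constant phase \emph{before} strict positivity, but equality $\||\varphi|^{\prime}\|_{L^{2}}=\|\varphi^{\prime}\|_{L^{2}}$ alone does not pin down the phase where $|\varphi|$ vanishes (across a zero the phase can jump, e.g.\ a sign change). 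The paper first proves $\psi:=|\varphi|>0$ on all of $\Gamma$, then writes $u_{e}=\psi_{e}z_{e}$ with $|z_{e}|=1$, uses \eqref{CIT} to get $\sum_{e}\int\psi_{e}^{2}|z_{e}^{\prime}|^{2}=0$, and needs $\psi_{e}>0$ to conclude $z_{e}^{\prime}\equiv0$ on each edge, with continuity at the (strictly positive) vertices gluing the edge phases into a single constant $\theta$. With that reordering, your part (ii) is complete and coincides with the paper's argument.
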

The following orbital stability result follows from Theorems \ref{TT1} and \ref{TT2}.
\begin{corollary} \label{TT33}
Let $p\geq 5$. Then for every fixed $r>0$ and $c<c^{\ast}(r)$ we have that  $\mathcal{M}^{r}_{c}$ is  $H^{1}(\Gamma)$-stable with respect to NLS \eqref{00NL}.
\end{corollary}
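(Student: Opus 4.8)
The plan is to follow the classical concentration--compactness / Cazenave--Lions scheme, combining the precompactness of minimizing sequences guaranteed by Theorem~\ref{TT1} with the conservation laws of Proposition~\ref{PCS}, and adding a \emph{trapping} (potential-well) argument that keeps the flow inside the ball $B(r)$. I would argue by contradiction: suppose $\mathcal{M}^{r}_{c}$ is not $H^{1}(\Gamma)$-stable. Then there exist $\varepsilon_{0}>0$, a sequence of initial data $u_{0,n}\in H^{1}(\Gamma)$ with $\inf_{\varphi\in\mathcal{M}^{r}_{c}}\|u_{0,n}-\varphi\|_{H^{1}}\to 0$, and times $t_{n}>0$ such that the corresponding solutions $u_{n}(t)$ of \eqref{00NL} satisfy $\inf_{\varphi\in\mathcal{M}^{r}_{c}}\|u_{n}(t_{n})-\varphi\|_{H^{1}}\geq\varepsilon_{0}$ for all $n$. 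Writing $c_{n}:=\|u_{0,n}\|^{2}_{L^{2}}$, continuity of $E$ and of the $L^{2}$-norm on $H^{1}(\Gamma)$ gives $c_{n}\to c$ and $E(u_{0,n})\to\nu^{r}_{c}$, and by Proposition~\ref{PCS} these quantities are preserved along the flow: $\|u_{n}(t)\|^{2}_{L^{2}}=c_{n}$ and $E(u_{n}(t))=E(u_{0,n})$ for all $t$.

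The core of the argument, and the step I expect to be the main obstacle, is to show that the solutions remain trapped in $B(r)$ for all time once $n$ is large. For this I would first invoke the interior localization of the minimizers: by Lemma~\ref{L4} (and the inclusion $\mathcal{M}^{r}_{c}\subset B(rc)$ recorded in \eqref{E12}), for $c<c^{\ast}(r)$ every element of $\mathcal{M}^{r}_{c}$ lies strictly inside $B(r)$, say in a ball $B(\rho)$ with $\rho<r$; hence for $n$ large $u_{0,n}\in B(\rho')$ with $\rho<\rho'<r$. Next I would establish an energy barrier on the boundary. Using the identity $E(u)=\tfrac{1}{2}\|u\|^{2}_{G}-\lambda_{0}\|u\|^{2}_{L^{2}}-\tfrac{1}{p+1}\int_{\Gamma}|u|^{p+1}\,dx$ together with the Gagliardo--Nirenberg estimate of Lemma~\ref{L1}, one controls $\int_{\Gamma}|u|^{p+1}\,dx$ by a quantity that is small when $c$ is small and $\|u\|^{2}_{G}=r$ is fixed; thus there is $\eta>0$ with $E(u)\geq \nu^{r}_{c}+\eta$ for every $u\in S(c_{n})$ with $\|u\|^{2}_{G}=r$ and $n$ large. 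Since $t\mapsto\|u_{n}(t)\|^{2}_{G}$ is continuous ($\|\cdot\|_{G}$ being a continuous quadratic form on $H^{1}(\Gamma)$ under Assumption~2) and $u_{n}\in C([0,T_{n}),H^{1}(\Gamma))$, if $u_{n}(t)$ were to leave $B(r)$ it would first meet the level set $\{\|u\|^{2}_{G}=r\}$ at some time $\tau_{n}$; but then $E(u_{n}(\tau_{n}))=E(u_{0,n})\to\nu^{r}_{c}<\nu^{r}_{c}+\eta$, contradicting the barrier. Therefore $u_{n}(t)\in B(r)$ for all $t$, and by the blow-up alternative the solutions are global.

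With the trapping in hand the conclusion is routine. The renormalized sequence $\widetilde{u}_{n}:=\sqrt{c/c_{n}}\,u_{n}(t_{n})$ satisfies $\widetilde{u}_{n}\in S(c)$, lies in $B(r)$ up to enlarging $r$ by an arbitrarily small amount (which changes neither $\nu^{r}_{c}$ nor $\mathcal{M}^{r}_{c}$ for small $c$, by the remark following Theorem~\ref{TT1}), and obeys $E(\widetilde{u}_{n})\to\nu^{r}_{c}$ since $c_{n}\to c$ and $E$ is continuous; hence $\{\widetilde{u}_{n}\}$ is a minimizing sequence for $\nu^{r}_{c}$. By the precompactness asserted in Theorem~\ref{TT1}, a subsequence converges in $H^{1}(\Gamma)$ to some $\varphi_{\ast}\in\mathcal{M}^{r}_{c}$, and since $\|\widetilde{u}_{n}-u_{n}(t_{n})\|_{H^{1}}\to 0$ we obtain $u_{n}(t_{n})\to\varphi_{\ast}$ in $H^{1}(\Gamma)$, contradicting $\inf_{\varphi\in\mathcal{M}^{r}_{c}}\|u_{n}(t_{n})-\varphi\|_{H^{1}}\geq\varepsilon_{0}$. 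The delicate points I would watch are the uniformity of the barrier estimate for masses $c_{n}$ near $c$ and the verification that the mass renormalization keeps the sequence inside the slightly enlarged well; both are controlled by the smallness of $c$ together with the continuity of the functionals involved.
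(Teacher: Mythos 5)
Your proposal is correct and follows the same overall Cazenave--Lions contradiction scheme as the paper (conservation laws plus the precompactness of minimizing sequences from Theorem \ref{TT1}), but it handles the crucial step --- keeping $u_{n}(t_{n})$ inside $B(r)$ --- by a genuinely different mechanism. The paper argues softly: it normalizes $\|u_{n,0}\|^{2}_{L^{2}}=c$ at the outset, and if $\|u_{n}(t_{n})\|^{2}_{G}>r$ for all large $n$ it selects a first crossing time $t^{\ast}_{n}$ with $\|u_{n}(t^{\ast}_{n})\|^{2}_{G}=r$; these crossing points form a minimizing sequence, so by Theorem \ref{TT1} they converge strongly to a minimizer $u^{\ast}$ with $\|u^{\ast}\|^{2}_{G}=r$, which contradicts Lemma \ref{L4} since minimizers must lie in $B(rc)$, away from the boundary. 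You instead construct a quantitative energy barrier on the sphere $\{\|u\|^{2}_{G}=r\}$ directly from the Gagliardo--Nirenberg estimate --- essentially re-running the computation behind \eqref{NLSKDV} at $s=r$ and comparing with \eqref{DE} --- so that conservation of energy forbids crossing altogether. Your route buys more: trapping in $B(r)$ for \emph{all} time (hence global existence of the trapped solutions via the blow-up alternative), uniformity in the masses $c_{n}\to c$, and no second appeal to compactness at the trapping step; in particular, your renormalization $\sqrt{c/c_{n}}\,u_{n}(t_{n})$ performed at time $t_{n}$ is the rigorous version of the paper's somewhat glossed ``without loss of generality $\|u_{n,0}\|^{2}_{L^{2}}=c$''. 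The paper's route buys economy, reusing \eqref{E12} as a black box rather than re-deriving a barrier. Two minor caveats on your side, both within the paper's own level of rigor about constants: the barrier gap $\eta$ as you derive it may force a slight shrinking of $c^{\ast}(r)$ relative to the threshold produced in Lemma \ref{L4} (harmless, since $c^{\ast}$ is only asserted to exist), and the ``enlarge $r$ by an arbitrarily small amount'' step leans on the remark following Theorem \ref{TT1}, which is stated only for sufficiently small $c$ and whose open-condition verification for $r+\delta$ you should spell out from the explicit inequalities in the proof of Lemma \ref{L4}.
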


Now assume that $\Gamma$ is a star-graph with $N$ edges. If  $W=0$ and $\alpha_{v}=\gamma >0$ in \eqref{NN1}-\eqref{NN2}, then it is well known that equation \eqref{EP} has $[(N-1)/2]+1$ (here $[s]$ denote the integer part of $s\in \mathbb{R}$) solutions $\phi_{\omega,j}=(\varphi_{\omega,j})^{N}_{j=1}$ with $j=0$, $1$, $\ldots$, $[(N-1)/2]$, which are given by

\begin{equation}\label{DF1}
(\varphi_{\omega,j})_{i}(x)=
\begin{cases} 
f(x-a_{j}) \quad i= 1,\ldots,j \\
f(x+a_{j}) \quad i= j+1,\ldots,N
\end{cases}
a_{j}=\text{tanh}^{-1}\left(\frac{\gamma  }{(N-2j)\sqrt{\omega}}\right)
\end{equation}
where
\begin{align*}
f(x)=\left[\frac{(p+1)\omega}{2}\text{sech}^{2}\left(\frac{(p-1)\sqrt{\omega}}{2}x\right) \right]^{\frac{1}{p-1}}, \quad \omega>\frac{\gamma^{2}}{(N-2j)^{2}}.
\end{align*}
Moreover, in this case (see \textit{Assumption 3}),
\begin{align*}
-\lambda_{0}=-\frac{\gamma^{2}}{N^{2}}=\text{inf}\left\{\mathfrak{F_{\gamma}}[u]: u\in H^{1}(\Gamma), \|u\|^{2}_{L^{2}}=1\right\}.
\end{align*}
In \cite{AQFF} (see also \cite{ANGO}) the authors study the stability of $u(x,t)=e^{i\omega t}\phi_{\omega,0}(x)$  in the $L^{2}$-critical and supercritical cases.  Notice that the stability analysis in \cite{AQFF, ANGO} relies on the theory by Grillakis, Shatah and Strauss.
 
In the  $L^{2}$-supercritical case $p>5$, we apply the Theorem \ref{TT2} and Corollary \ref{TT33} in order to deduce directly the
stability of the  standing wave $u(x,t)=e^{i\omega t}\phi_{\omega,0}(x)$, which was previously treated in the literature
only with the Grillakis-Shatah-Strauss theory.
{\begin{corollary}\label{C11} Let $\Gamma$ be a star graph.  Assume  that $W=0$ and $\alpha=\gamma >0$ in \eqref{NN1}-\eqref{NN2}. If $p\geq 5$,  then there exists $\omega^{\ast}>0$ such that $e^{i\omega t}\phi_{\omega,0}$ is stable in $H^{1}(\Gamma)$ for any $\omega\in (\gamma^{2}/N^{2}, \omega^{\ast})$. 
\end{corollary}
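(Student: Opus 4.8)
The plan is to identify, for $\omega$ close to the threshold $\gamma^{2}/N^{2}=\lambda_{0}$, the minimizer set $\mathcal{M}^{r}_{c}$ with the orbit $\{e^{i\theta}\phi_{\omega,0}:\theta\in\mathbb{R}\}$ for a suitable mass $c$, and then to read off stability directly from Corollary \ref{TT33}. The natural bookkeeping is by mass: given $\omega>\gamma^{2}/N^{2}$ I would set $c(\omega):=\|\phi_{\omega,0}\|^{2}_{L^{2}}=N\int_{a_{0}}^{\infty}f(y)^{2}\,dy$, using \eqref{DF1} and the evenness of $f$. Since $a_{0}=\tanh^{-1}(\gamma/(N\sqrt{\omega}))\to+\infty$ as $\omega\downarrow\gamma^{2}/N^{2}$, one has $c(\omega)\to0$; after the rescaling $t=\sqrt{\omega}\,y$ the exponential decay of the tail integral dominates the algebraic prefactor, so a short computation shows $c(\omega)$ is strictly increasing on a right neighbourhood of $\gamma^{2}/N^{2}$ and hence defines a continuous bijection from $(\gamma^{2}/N^{2},\omega^{\ast})$ onto $(0,c^{\ast})$ for $\omega^{\ast}$ small. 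To check admissibility, testing \eqref{EP} against $\phi_{\omega,0}$ gives $\mathfrak{F}[\phi_{\omega,0}]=-\omega\|\phi_{\omega,0}\|^{2}_{L^{2}}+\|\phi_{\omega,0}\|^{p+1}_{L^{p+1}}$, whence by \eqref{n1}
\[
\|\phi_{\omega,0}\|^{2}_{G}=(2\lambda_{0}-\omega)\,c(\omega)+\|\phi_{\omega,0}\|^{p+1}_{L^{p+1}}.
\]
As $\omega\downarrow\gamma^{2}/N^{2}$ both $c(\omega)$ and $\|\phi_{\omega,0}\|^{p+1}_{L^{p+1}}$ tend to $0$ while $2\lambda_{0}-\omega\to\lambda_{0}>0$, so $\|\phi_{\omega,0}\|_{G}\to0$; shrinking $\omega^{\ast}$ if necessary guarantees both $c(\omega)<c^{\ast}(r)$ and $\|\phi_{\omega,0}\|^{2}_{G}\le r$, i.e. $\phi_{\omega,0}\in S(c(\omega))\cap B(r)$.

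The heart of the argument is the identification of the minimizer. Fix $\omega\in(\gamma^{2}/N^{2},\omega^{\ast})$, put $c=c(\omega)$, and let $\psi\in\mathcal{M}^{r}_{c}$ (nonempty by Theorem \ref{TT1}). By Theorem \ref{TT2}(ii), since $W=0\le0$, we have $\psi=e^{i\theta}\rho$ with $\rho>0$ solving \eqref{EP} for some $\tilde\omega$ with $\lambda_{0}<\tilde\omega\le\lambda_{0}(1+Kc^{\frac{p-1}{2}})$. I would then classify the positive $H^{1}(\Gamma)$-solutions of \eqref{EP}: on each half-line a positive solution of $-\rho_{e}''+\tilde\omega\rho_{e}=\rho_{e}^{\,p}$ decaying at infinity is a translate $f(\cdot-x_{0,e})$ of the soliton, continuity at the vertex forces $|x_{0,e}|\equiv a$, and the $\delta$-condition $\sum_{e}\partial_{o}\rho_{e}(0)=-\gamma\rho(0)$ selects exactly $a=a_{j}$ with $N-2j>0$; this reproduces precisely the family $\{\phi_{\tilde\omega,j}\}_{j=0}^{[(N-1)/2]}$ of \eqref{DF1}. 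Thus $\rho=\phi_{\tilde\omega,j}$ for some $j$.

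It remains to pin down $j=0$ and $\tilde\omega=\omega$. The key observation is a spectral gap: by \eqref{DF1} an excited state $\phi_{\tilde\omega,j}$ with $j\ge1$ (which occurs only when $N\ge3$) exists only for $\tilde\omega>\gamma^{2}/(N-2j)^{2}\ge\gamma^{2}/(N-2)^{2}>\gamma^{2}/N^{2}=\lambda_{0}$. Since Theorem \ref{TT2}(i) forces $\tilde\omega\le\lambda_{0}(1+Kc^{\frac{p-1}{2}})\to\lambda_{0}$ as $c\to0$, after shrinking $\omega^{\ast}$ (hence $c^{\ast}$) once more we have $\tilde\omega<\gamma^{2}/(N-2)^{2}$, so $j\ge1$ is impossible and necessarily $\rho=\phi_{\tilde\omega,0}$. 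Finally $\|\phi_{\tilde\omega,0}\|^{2}_{L^{2}}=c=\|\phi_{\omega,0}\|^{2}_{L^{2}}$ together with the injectivity of $c(\cdot)$ established above yields $\tilde\omega=\omega$, so $\psi=e^{i\theta}\phi_{\omega,0}$ and $\mathcal{M}^{r}_{c}=\{e^{i\theta}\phi_{\omega,0}:\theta\in\mathbb{R}\}$. Corollary \ref{TT33} then gives $H^{1}(\Gamma)$-stability of this orbit, i.e. of $e^{i\omega t}\phi_{\omega,0}$, for every $\omega\in(\gamma^{2}/N^{2},\omega^{\ast})$.

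I expect the main obstacle to be the two analytic facts underpinning this chain: the complete classification of positive decaying solutions of \eqref{EP} on the star graph (guaranteeing that the positive minimizer furnished by Theorem \ref{TT2} is one of the explicit $\phi_{\tilde\omega,j}$ and nothing else), and the strict monotonicity—hence injectivity—of $\omega\mapsto c(\omega)$ near $\gamma^{2}/N^{2}$. Given Theorems \ref{TT1}–\ref{TT2}, the conceptually delicate exclusion of the excited states is in fact cheap, being forced by the gap between $\lambda_{0}$ and $\gamma^{2}/(N-2)^{2}$; the remaining effort is the explicit tail estimates for $c(\omega)$ and $\|\phi_{\omega,0}\|_{G}$ as $\omega\downarrow\gamma^{2}/N^{2}$.
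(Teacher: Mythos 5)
Your proposal is correct and follows essentially the same route as the paper: identify $\mathcal{M}^{r}_{c}$ with the orbit $\{e^{i\theta}\phi_{\omega(c),0}\}$ by combining the classification of positive stationary states on the star graph, the frequency bound $\omega\to\lambda_{0}$ from Theorem \ref{TT2}(i) to exclude the excited states $\phi_{\omega,j}$, $j\geq 1$, and the strict monotonicity of $\omega\mapsto\|\phi_{\omega,0}\|^{2}_{L^{2}}$ near the threshold, then invoke Corollary \ref{TT33}. The only difference is cosmetic: where the paper cites \cite[Theorem 4]{AQFF} for the classification and \cite[Remark 6.1]{AQFF} for the monotonicity of $R(\omega)$, you sketch direct proofs of these facts (both sketches are sound), and you additionally verify $\phi_{\omega,0}\in B(r)$ explicitly via the identity $\|\phi_{\omega,0}\|^{2}_{G}=(2\lambda_{0}-\omega)c(\omega)+\|\phi_{\omega,0}\|^{p+1}_{L^{p+1}}$, a point the paper passes over with a ``without loss of generality'' remark.
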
}

The plan of the paper is the following. In Section \ref{S:1} we recall several known results, which will be needed later. In Section \ref{S:2}, we prove the existence of a minimizer for $\nu^{r}_{c}$.  Section \ref{S:3} contains the proof of Theorem \ref{TT2}. Section \ref{S:5} is  devoted to the proof of Corollaries \ref{TT33} and \ref{C11}. Throughout this paper, the letter $C$ may stand for various strictly positive
constants, when no confusion is possible.

\section{Preliminary}
\label{S:1}
We recall that a function $u$ on $\Gamma$ is a collection of functions $u_{e}(x)$ defined on each edge $e\in J$. On $\Gamma$ we consider the Hilbert space $L^{2}(\Gamma)$ of measurable and square-integrable functions $u:\Gamma\rightarrow \mathbb{C}^{|J|}$, equipped
with the standard norm,
\begin{equation*}
L^{2}(\Gamma)= \bigoplus_{e\in J}L^{2}(I_{e}),\quad\quad \left\|u\right\|^{2}_{L^{2}}=\sum_{e\in J}\int_{I_{e}}|u_{e}(x)|^{2}dx,
\end{equation*}
where $u_{e}\in L^{2}(I_{e})$. Analogously, we define the $L^{p}(\Gamma)$-spaces for $1\leq p <+\infty$ as,
\begin{equation*}
L^{p}(\Gamma)= \bigoplus_{e\in J}L^{p}(I_{e}),\quad\quad \left\|u\right\|^{p}_{L^{p}}=\sum_{e\in J}\|u_{e}\|^{p}_{L^{p}}.
\end{equation*}

From Proposition 2.2 in \cite{CaFiNo2017} we have that $-\lambda_{0}$ is a simple eigenvalue of $H$ corresponding to strictly positive eigenfunction; more precisely:
\begin{lemma} \label{L2}
Let assumptions 1-3 hold true. Then $-\lambda_{0}$ is a simple eigenvalue of $H$ with corresponding strictly positive eigenfunction $\psi_{0}\in H^{1}(\Gamma)$ such that $\|\psi_{0}\|^{2}_{L^{2}}=1$.
\end{lemma}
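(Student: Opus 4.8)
The statement is the standard ground-state (Perron--Frobenius) result for the self-adjoint operator $H$, and the plan is to prove it in three steps: production of a nonnegative minimizer, upgrade to strict positivity, and simplicity of the eigenvalue.

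First I would use Assumption 3, which guarantees that $-\lambda_{0}$ is the bottom of the spectrum of $H$ and is an eigenvalue; by the variational characterization of $-\lambda_{0}$ there is a nonzero $\psi_{0}\in\mathrm{dom}(H)$ realizing the infimum. The key reduction is that $\psi_{0}$ may be taken nonnegative. Indeed, for any $u\in H^{1}(\Gamma)$ the function $|u|$ again lies in $H^{1}(\Gamma)$ (it is continuous on $\Gamma$ and belongs to $H^{1}(I_{e})$ on each edge), the diamagnetic inequality gives $\int_{\Gamma}\bigl||u|'\bigr|^{2}\,dx\le\int_{\Gamma}|u'|^{2}\,dx$, while $|u(v)|=\bigl||u|(v)\bigr|$ at every vertex and $W|u|^{2}$ is unchanged; hence $\mathfrak{F}[|u|]\le\mathfrak{F}[u]$ with $\||u|\|_{L^{2}}=\|u\|_{L^{2}}$. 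Applying this to the normalized minimizer $\psi_{0}$ shows $|\psi_{0}|$ is again a minimizer, hence an eigenfunction for $-\lambda_{0}$ via the Euler--Lagrange equation of the constrained problem. Replacing $\psi_{0}$ by $|\psi_{0}|$ and rescaling, I may assume $\psi_{0}\ge 0$ and $\|\psi_{0}\|_{L^{2}}^{2}=1$.

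Next I would upgrade this to $\psi_{0}>0$ on all of $\Gamma$. On each edge $\psi_{0}$ solves the linear ODE $\psi_{0}''=(W+\lambda_{0})\psi_{0}$, whose coefficient lies in $L^{1}_{\mathrm{loc}}$ by Assumption 2, so the Carath\'eodory uniqueness theorem applies to the first-order system for $(\psi_{0},\psi_{0}')$. If $\psi_{0}$ vanished at an interior point $x_{0}$ of an edge, then nonnegativity forces $x_{0}$ to be a minimum and $\psi_{0}'(x_{0})=0$, so uniqueness yields $\psi_{0}\equiv 0$ on that edge. If instead $\psi_{0}$ vanished at a vertex $v$, the delta condition gives $\sum_{e\prec v}\partial_{o}\psi_{0,e}(v)=-\alpha_{v}\psi_{0}(v)=0$; since each outward derivative is nonnegative (as $\psi_{0}\ge 0$ vanishes at $v$), every $\partial_{o}\psi_{0,e}(v)$ vanishes, and uniqueness again forces $\psi_{0}\equiv 0$ on each edge incident to $v$. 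Propagating this vanishing through the connected graph (Assumption 1) would give $\psi_{0}\equiv 0$, a contradiction; hence $\psi_{0}>0$ everywhere. Simplicity then follows by a Perron--Frobenius argument: since $H$ has real coefficients, the eigenspace at $-\lambda_{0}$ has a real basis, and were it more than one-dimensional I could pick a real eigenfunction $\phi\perp\psi_{0}$ in $L^{2}(\Gamma)$. Steps 1--2 applied to $\phi$ make $|\phi|$ a strictly positive eigenfunction, so $\phi$ never vanishes and, being continuous on the connected graph, has constant sign; but then $\langle\psi_{0},\phi\rangle_{L^{2}}\neq 0$ because $\psi_{0}>0$, contradicting orthogonality. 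Therefore the eigenspace is one-dimensional.

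The main obstacle I anticipate is the strict-positivity step: one must run the ODE uniqueness/maximum-principle argument with a potential that is only $L^{1}_{\mathrm{loc}}$ rather than continuous, and handle the behavior at the (non-smooth) vertices through the delta coupling conditions rather than in a classical $C^{2}$ setting. Once the Carath\'eodory framework and the sign of the outward derivatives at a zero of $\psi_{0}$ are in place, the remaining ingredients---the diamagnetic inequality and the orthogonality argument---are routine. (I note that the conclusion is also recorded as Proposition 2.2 in \cite{CaFiNo2017}, so a direct citation is available.)
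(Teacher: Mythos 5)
Your argument is mathematically sound, but it is worth noting that the paper does not prove this lemma at all: it is stated with a direct citation to \cite[Proposition 2.2]{CaFiNo2017}, so there is no in-paper proof to match. What you have written is essentially the standard ground-state argument that the cited reference carries out, and it is correct as sketched: Assumption 3 already asserts that $-\lambda_{0}$ is an eigenvalue, so attainment of the infimum is immediate; the diamagnetic inequality $\bigl||u|'\bigr|\le |u'|$ a.e.\ together with the invariance of the $W$- and vertex-terms under $u\mapsto |u|$ gives a nonnegative normalized minimizer; and your positivity step correctly identifies the two delicate points, namely that on each edge the equation $\psi_{0}''=(W+\lambda_{0})\psi_{0}$ holds with only $W\in L^{1}_{\mathrm{loc}}$ (so one needs Carath\'eodory/Gr\"onwall uniqueness for the linear first-order system, noting $W\psi_{0}\in L^{1}_{\mathrm{loc}}$ since $\psi_{0}$ is locally bounded, whence $\psi_{0}'$ is absolutely continuous), and that at a vertex where $\psi_{0}(v)=0$ the delta condition $\sum_{e\prec v}\partial_{o}\psi_{0,e}(v)=-\alpha_{v}\psi_{0}(v)=0$ combined with the sign $\partial_{o}\psi_{0,e}(v)\ge 0$ forces all outward derivatives to vanish; propagation through the connected graph (Assumption 1) then yields the contradiction. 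Your simplicity argument via a real orthogonal eigenfunction $\phi$, whose modulus is again a minimizer and hence strictly positive, is the standard Perron--Frobenius conclusion and is complete. Two small points deserve a line each if you write this out in full: you should justify that the constrained minimizer satisfies the weak eigen-equation on all of $H^{1}(\Gamma)$ and then recover membership in $\mathrm{dom}(H)$ (including the vertex conditions) by integration by parts, and in the simplicity step you should note that $H$ commutes with complex conjugation so the eigenspace has a real basis, which you do implicitly. It is also worth observing that your ODE-uniqueness positivity argument is the linear analogue of what the paper itself does for the \emph{nonlinear} stationary equation in the proof of Theorem \ref{TT2}~ii), where the corresponding step is delegated to V\'azquez's strong maximum principle \cite[Theorem 1]{LVL}; your self-contained version makes explicit exactly where each of Assumptions 1--3 enters, which the bare citation in the paper obscures.
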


For convenience, we recall the Gagliardo-Nirenberg inequality on graphs.
\begin{lemma} \label{L1}
i) Let $\Gamma$ be any non-compact graph and $p\geq1$. Then there exists a positive constant $C$ such that for all $v\in H^{1}(\Gamma)$,
\begin{equation}\label{GN}
\|v\|^{p+1}_{L^{p+1}}\leq C\|v^{\prime}\|^{\frac{p-1}{2}}_{L^{2}}\|v\|^{\frac{p+3}{2}}_{L^{2}}.
\end{equation}
ii) Under assumption 2, there exist positive constants $0<C_{1}<1$ and $C_{2}>0$ such that
\begin{equation}\label{LW}
\left|(u, Wu)-\sum_{v\in V}\alpha_{v}|u(v)|^{2}\right|\leq C_{1}\|u^{\prime}\|^{2}_{L^{2}}+C_{2}\|u\|^{2}_{L^{2}}.
\end{equation}
In particular,  there exists a constant $K>0$ such that $\|u\|^{2}_{H^{1}}\leq K(\|u\|^{2}_{G}+\lambda_{0}\|u\|^{2}_{L^{2}})$.
Here, the norm $\|\cdot\|^{2}_{G}$ is defined in \eqref{n1}.
\end{lemma}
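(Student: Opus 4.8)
The plan is to base everything on a single uniform bound, namely the $L^{\infty}$ estimate
\begin{equation*}
\|v\|^{2}_{L^{\infty}(\Gamma)}\leq 2\|v\|_{L^{2}}\|v^{\prime}\|_{L^{2}},\qquad v\in H^{1}(\Gamma),
\end{equation*}
which is where the non-compactness of $\Gamma$ enters. To prove it I would fix $x_{0}\in\Gamma$ and, using that $\Gamma$ is connected and possesses at least one external edge (a half-line) by Assumption 1, choose a path $\mathcal{P}$ running from $x_{0}$ out to infinity along that half-line. Since $v_{e}\in H^{1}(I_{e})$ on each edge and $v$ is continuous across vertices, $v$ is absolutely continuous along $\mathcal{P}$ and decays at the infinite end; hence $|v(x_{0})|^{2}=-\int_{\mathcal{P}}2\,\mathrm{Re}(\bar{v}v^{\prime})\,ds\leq 2\|v\|_{L^{2}(\mathcal{P})}\|v^{\prime}\|_{L^{2}(\mathcal{P})}\leq 2\|v\|_{L^{2}}\|v^{\prime}\|_{L^{2}}$, and a standard continuity argument lets me take the supremum over $x_{0}$. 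This is the main obstacle of the whole lemma: everything else is interpolation plus Young's inequality, but this step genuinely uses that $\Gamma$ is connected and noncompact.

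For part (i) I would write $\|v\|^{p+1}_{L^{p+1}}=\sum_{e}\int_{I_{e}}|v_{e}|^{p-1}|v_{e}|^{2}\,dx\leq\|v\|^{p-1}_{L^{\infty}}\|v\|^{2}_{L^{2}}$ and insert the $L^{\infty}$ bound above; since $\tfrac{p-1}{2}+2=\tfrac{p+3}{2}$, the exponents match and \eqref{GN} follows with $C=2^{(p-1)/2}$. Carrying out the identical computation with $p+1$ replaced by an arbitrary $q\geq 2$ yields the general interpolation $\|v\|^{2}_{L^{q}}\leq C\|v^{\prime}\|^{(q-2)/q}_{L^{2}}\|v\|^{(q+2)/q}_{L^{2}}$, which I will reuse in part (ii).

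For part (ii) I would estimate the three contributions separately. Writing $W_{+}=f+g$ with $f\in L^{1}(\Gamma)$, $g\in L^{\infty}(\Gamma)$, I bound $\int W_{+}|u|^{2}\leq\|f\|_{L^{1}}\|u\|^{2}_{L^{\infty}}+\|g\|_{L^{\infty}}\|u\|^{2}_{L^{2}}$ and absorb the $L^{\infty}$ term via the estimate above together with Young's inequality $2\|u\|_{L^{2}}\|u^{\prime}\|_{L^{2}}\leq\epsilon\|u^{\prime}\|^{2}_{L^{2}}+\epsilon^{-1}\|u\|^{2}_{L^{2}}$. For the defocusing part, H\"older with $W_{-}\in L^{r}(\Gamma)$ gives $\int W_{-}|u|^{2}\leq\|W_{-}\|_{L^{r}}\|u\|^{2}_{L^{2r^{\prime}}}$; the range $r\in[1,1+2/(p-1)]$ yields $2r^{\prime}\in[p+1,\infty]$, in particular $2r^{\prime}\geq 2$, so the general interpolation from part (i) applies and produces the power $\|u^{\prime}\|^{1/r}_{L^{2}}$ with exponent $1/r\leq 1<2$, which Young's inequality again converts into $\epsilon\|u^{\prime}\|^{2}_{L^{2}}+C_{\epsilon}\|u\|^{2}_{L^{2}}$. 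Finally, since $V$ is finite, $|\sum_{v}\alpha_{v}|u(v)|^{2}|\leq(\max_{v}|\alpha_{v}|)\,|V|\,\|u\|^{2}_{L^{\infty}}$ is handled identically. Adding the three bounds and choosing $\epsilon$ small enough that the total coefficient of $\|u^{\prime}\|^{2}_{L^{2}}$ stays strictly below $1$ yields \eqref{LW} with some $0<C_{1}<1$.

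For the final assertion I would use \eqref{LW} in the form $(u,Wu)-\sum_{v}\alpha_{v}|u(v)|^{2}\geq-C_{1}\|u^{\prime}\|^{2}_{L^{2}}-C_{2}\|u\|^{2}_{L^{2}}$, so that $\mathfrak{F}[u]\geq(1-C_{1})\|u^{\prime}\|^{2}_{L^{2}}-C_{2}\|u\|^{2}_{L^{2}}$ and hence $\|u^{\prime}\|^{2}_{L^{2}}\leq(1-C_{1})^{-1}(\mathfrak{F}[u]+C_{2}\|u\|^{2}_{L^{2}})$. Recalling that $\mathfrak{F}[u]=\|u\|^{2}_{G}-2\lambda_{0}\|u\|^{2}_{L^{2}}$ and $\|u\|^{2}_{H^{1}}=\|u^{\prime}\|^{2}_{L^{2}}+\|u\|^{2}_{L^{2}}$, I substitute and collect terms, arriving at $\|u\|^{2}_{H^{1}}\leq(1-C_{1})^{-1}\|u\|^{2}_{G}+A\|u\|^{2}_{L^{2}}$ for an explicit constant $A$. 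Since both $\|u\|^{2}_{G}$ and $\lambda_{0}\|u\|^{2}_{L^{2}}$ are nonnegative (the former because $\mathfrak{F}[u]\geq-\lambda_{0}\|u\|^{2}_{L^{2}}$ by the definition of $\lambda_{0}$, and $\lambda_{0}>0$ by Assumption 3), it suffices to take $K$ at least as large as both $(1-C_{1})^{-1}$ and $A/\lambda_{0}$ to obtain $\|u\|^{2}_{H^{1}}\leq K(\|u\|^{2}_{G}+\lambda_{0}\|u\|^{2}_{L^{2}})$, completing the proof.
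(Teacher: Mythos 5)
Your proof is correct, but it is worth noting that the paper itself does not prove this lemma at all: it simply defers \eqref{GN} to \cite[Proposition 2.1]{ADAM} and \eqref{LW} to \cite[Remark 2.1]{CaFiNo2017}, and leaves the ``in particular'' clause implicit. What you have produced is a self-contained argument organized around the single bound $\|v\|^{2}_{L^{\infty}}\leq 2\|v\|_{L^{2}}\|v^{\prime}\|_{L^{2}}$, proved by integrating $\tfrac{d}{ds}|v|^{2}=2\,\mathrm{Re}(\bar{v}v^{\prime})$ along a simple path from $x_{0}$ out an external edge; this is more elementary than the rearrangement route typically used for \eqref{GN} on graphs, gives the explicit constant $2^{(p-1)/2}$, and then makes part (ii) a mechanical matter of H\"older plus Young: your exponent bookkeeping is right ($W_{-}\in L^{r}$ with $r\in[1,(p+1)/(p-1)]$ gives $2r^{\prime}\in[p+1,\infty]$ and the derivative enters with power $1/r\leq 1<2$, so it can be absorbed with total coefficient $C_{1}<1$), and the finiteness of $V$ handles the vertex terms. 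The final step is also sound: from $\mathfrak{F}[u]\geq(1-C_{1})\|u^{\prime}\|^{2}_{L^{2}}-C_{2}\|u\|^{2}_{L^{2}}$ and $\mathfrak{F}[u]=\|u\|^{2}_{G}-2\lambda_{0}\|u\|^{2}_{L^{2}}$ one gets the claimed $K$, using that $\|u\|^{2}_{G}\geq\lambda_{0}\|u\|^{2}_{L^{2}}\geq 0$ by the definition of $\lambda_{0}$, exactly as you say. The one place deserving an explicit line rather than an appeal to ``decays at the infinite end'' is the vanishing of $|v|^{2}$ at infinity along the half-line: since $\bar{v}v^{\prime}\in L^{1}$ by Cauchy--Schwarz, $|v(s)|^{2}$ has a limit as $s\to\infty$, and the limit must be $0$ because $v\in L^{2}$; also say explicitly that the path can be chosen simple (no repeated edges), so that $\|v\|_{L^{2}(\mathcal{P})}\leq\|v\|_{L^{2}(\Gamma)}$. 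These are routine fixes, not gaps.
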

See \cite[Remark 2.1]{CaFiNo2017} for the proof of \eqref{LW}. For a proof of Gagliardo-Nirenberg inequality \eqref{GN} we refer to \cite[Proposition 2.1]{ADAM}.

\section{Variational analysis} 
\label{S:2}
In this section, we give the proof of Theorem \ref{TT1}. We have divided the proof into a sequence of lemmas.
\begin{lemma} \label{L3}
i)Let $r>0$ be fixed, then $S(c)\cap B(r)$ is not empty set iff $c\leq \frac{r}{\lambda_{0}}$.\\
ii) For any $r$, $c>0$, if $S(c)\cap B(r)\neq \emptyset$, then $\nu^{r}_{c}>-\infty$.
\end{lemma}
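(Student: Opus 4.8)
The plan is to prove the two claims separately, beginning with part (i), which is essentially an explicit computation using the spectral characterization of $\lambda_0$. Recall that by definition $\|u\|^2_G = \mathfrak{F}[u] + 2\lambda_0\|u\|^2_{L^2}$, and that $-\lambda_0 = \inf\{\mathfrak{F}[u] : \|u\|^2_{L^2}=1\}$. For any $u\in S(c)$, homogeneity of the quadratic form gives $\mathfrak{F}[u] \geq -\lambda_0 \|u\|^2_{L^2} = -\lambda_0 c$, and hence $\|u\|^2_G \geq -\lambda_0 c + 2\lambda_0 c = \lambda_0 c$. This shows that \emph{every} element of $S(c)$ satisfies $\|u\|^2_G \geq \lambda_0 c$, so if $S(c)\cap B(r)$ is nonempty we must have $\lambda_0 c \leq r$, i.e. $c \leq r/\lambda_0$. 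For the converse, I would exploit that the infimum defining $\lambda_0$ is attained: by Lemma \ref{L2} the normalized eigenfunction $\psi_0$ satisfies $\mathfrak{F}[\psi_0] = -\lambda_0$. Setting $u = \sqrt{c}\,\psi_0$ gives $u\in S(c)$ with $\|u\|^2_G = \lambda_0 c \leq r$, so $u\in S(c)\cap B(r)$. This proves the "if and only if."

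For part (ii), the goal is a lower bound on $E$ over the (nonempty) set $S(c)\cap B(r)$. The strategy is to control each term in $E$ by the $G$-norm, which is bounded by $r$ on $B(r)$, and by the $L^2$-norm, which is fixed at $c$ on $S(c)$. First I would rewrite the energy as
\begin{equation*}
E(u) = \frac{1}{2}\mathfrak{F}[u] - \frac{1}{p+1}\int_\Gamma |u|^{p+1}\,dx = \frac{1}{2}\|u\|^2_G - \lambda_0\|u\|^2_{L^2} - \frac{1}{p+1}\|u\|^{p+1}_{L^{p+1}}.
\end{equation*}
On $S(c)\cap B(r)$ the first two terms are bounded below by $-\lambda_0 c$ (using $\|u\|^2_G \geq 0$), so the only term that could drive $E$ to $-\infty$ is the nonlinear term. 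The key estimate is the Gagliardo-Nirenberg inequality \eqref{GN}, which bounds $\|u\|^{p+1}_{L^{p+1}}$ by $C\|u'\|^{(p-1)/2}_{L^2}\|u\|^{(p+3)/2}_{L^2}$.

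To finish I would bound $\|u'\|^2_{L^2}$ in terms of $\|u\|^2_G$ and $\|u\|^2_{L^2}$. From the second part of Lemma \ref{L1}, $\|u\|^2_{H^1} \leq K(\|u\|^2_G + \lambda_0\|u\|^2_{L^2})$, and in particular $\|u'\|^2_{L^2} \leq K(\|u\|^2_G + \lambda_0 \|u\|^2_{L^2})$. On $S(c)\cap B(r)$ this is bounded above by $K(r + \lambda_0 c)$, a finite constant depending only on $r$ and $c$. Plugging these bounds into the Gagliardo-Nirenberg inequality shows $\|u\|^{p+1}_{L^{p+1}}$ is bounded above by a finite constant depending only on $r$, $c$, $p$, and $\lambda_0$, whence $E(u)$ is bounded below on $S(c)\cap B(r)$ and $\nu^r_c > -\infty$. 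I do not anticipate a genuine obstacle here: the whole point of introducing the \emph{local} constraint $B(r)$ is precisely that it caps the $H^1$-norm, which is exactly what tames the supercritical nonlinearity that made the global problem \eqref{P1} unbounded below; the only care needed is to track that all constants are finite and independent of the particular minimizing element.
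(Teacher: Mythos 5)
Your proof is correct and follows essentially the same route as the paper: part (i) via the test function $\sqrt{c}\,\psi_0$ from Lemma \ref{L2} together with the variational lower bound $\mathfrak{F}[u]\geq -\lambda_0\|u\|_{L^2}^2$, and part (ii) via the Gagliardo--Nirenberg inequality \eqref{GN} combined with Lemma \ref{L1}~(ii) to bound the nonlinear term on $S(c)\cap B(r)$. The only cosmetic difference is that you bound $\|u'\|_{L^2}^2\leq K(r+\lambda_0 c)$ before invoking \eqref{GN}, whereas the paper keeps the factor $\|u\|_G^{(p-1)/2}\leq r^{(p-1)/4}$ explicit in the final estimate; the content is identical.
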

\begin{proof}
Let $u=\sqrt{c}\psi_{0}$ with $c\leq \frac{r}{\lambda_{0}}$. From Lemma \ref{L2}, we see that $\|u\|^{2}_{L^{2}}=c$ and 
$$\|u\|^{2}_{G}=\mathfrak{F_{}}[\sqrt{c}\psi_{0}]+2\lambda_{0}\|\sqrt{c}\psi_{0}\|^{2}_{L^{2}}=-\lambda_{0}c^{}\|\psi_{0}\|^{2}_{L^{2}}+2\lambda_{0}c^{}\|\psi_{0}\|^{2}_{L^{2}}=\lambda_{0}c^{}\leq r,$$ this implies that $S(c)\cap B(r)\neq \emptyset$. On the other hand, if $u\in S(c)\cap B(r)\neq \emptyset$, we see that  
$$r\geq \|u\|^{2}_{G}=\mathfrak{F_{}}[u]+2\lambda_{0}\|u\|^{2}_{L^{2}}\geq -\lambda_{0}\|u\|^{2}_{L^{2}}+2\lambda_{0}\|u\|^{2}_{L^{2}} = \lambda_{0}c.$$
Therefore, $c\leq \frac{r}{\lambda_{0}}$. This completes the proof of statement i). 
 {
Next, let $u\in S(c)\cap B(r)$. First, notice that from Gagliardo-Nirenberg inequality \eqref{GN} and Lemma \ref{L1} (ii), we see that 
\begin{equation*}
\|u\|^{p+1}_{L^{p+1}}\leq C\|u\|^{\frac{p-1}{2}}_{H^{1}}\|u\|^{\frac{p+3}{2}}_{L^{2}} \leq C_{1}\|u\|^{\frac{p-1}{2}}_{G}\|u\|^{\frac{p+3}{2}}_{L^{2}}+C_{2}\|u\|^{{p+1}}_{L^{2}}.
\end{equation*}
Since $u\in S(c)\cap B(r)$, it follows that
\begin{align*}
E(u)&=\frac{1}{2}\|u\|^{2}_{G}-{\lambda_{0}}\|u\|^{2}_{L^{2}}-\frac{1}{p+1}\|u\|^{p+1}_{L^{p+1}}\\
    &\geq -{\lambda_{0}}\|u\|^{2}_{L^{2}}-C_{1}\|u\|^{\frac{p-1}{2}}_{G}\|u\|^{\frac{p+3}{2}}_{L^{2}}-C_{2}\|u\|^{{p+1}}_{L^{2}}\\ 
  &\geq -{\lambda_{0}}c-Cr^{\frac{p-1}{4}}c^{\frac{p+3}{4}}-C_{2}c^{\frac{p+1}{2}}>-\infty.
\end{align*}
This ends the proof.}
\end{proof}

\begin{lemma} \label{L4}
Let $p\geq 5$.  Then for any number $r>0$, there exists $c^{\ast}:=c^{\ast}(r)>0$ such that for every $c<c^{\ast}$, $S(c)\cap B(r)\neq \emptyset$ and 
\begin{equation}\label{E12}
\inf_{S(c)\cap B(rc/2)} E(u)< \inf_{S(c)\cap (B(r)\setminus B(rc))} E(u).
\end{equation}
\end{lemma}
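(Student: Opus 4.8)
The plan is to estimate the two sides of \eqref{E12} to leading order in $c$, showing that the $O(c)$ coefficients are separated while the nonlinear corrections are of strictly higher order $O(c^{(p+1)/2})$. Throughout, for $u\in S(c)$ I use $E(u)=\tfrac12\|u\|_G^2-\lambda_0 c-\tfrac1{p+1}\|u\|_{L^{p+1}}^{p+1}$, so that only $\|u\|_G$ and the $L^{p+1}$-term vary. For the left-hand side I take the ground state $\sqrt c\,\psi_0$ as test function: by Lemma \ref{L2} and the computation in Lemma \ref{L3} one has $\|\sqrt c\,\psi_0\|_{L^2}^2=c$ and $\|\sqrt c\,\psi_0\|_G^2=\lambda_0 c$, so $\sqrt c\,\psi_0\in S(c)\cap B(rc/2)$ (the inclusion holding once $rc/2\geq\lambda_0 c$), and therefore
\[
\inf_{S(c)\cap B(rc/2)}E\;\leq\;E(\sqrt c\,\psi_0)=-\tfrac12\lambda_0 c-\tfrac{c^{(p+1)/2}}{p+1}\|\psi_0\|_{L^{p+1}}^{p+1}\;\leq\;-\tfrac12\lambda_0 c.
\]

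The core of the argument is a matching lower bound on the shell $S(c)\cap(B(r)\setminus B(rc))$. Starting from the Gagliardo--Nirenberg estimate of Lemma \ref{L3}, $\|u\|_{L^{p+1}}^{p+1}\leq C_1\|u\|_G^{(p-1)/2}c^{(p+3)/4}+C_2 c^{(p+1)/2}$, I set $t=\|u\|_G^2\in(rc,r]$ and obtain $E(u)\geq h(t)-\lambda_0 c-\tfrac{C_2}{p+1}c^{(p+1)/2}$ with $h(t)=\tfrac12 t-\tfrac{C_1}{p+1}t^{(p-1)/4}c^{(p+3)/4}$. I then show $h$ is increasing on $(rc,r]$: since $h'(t)=\tfrac12-\tfrac{C_1(p-1)}{4(p+1)}t^{(p-5)/4}c^{(p+3)/4}$ and $p\geq5$ gives $(p-5)/4\geq0$, the factor $t^{(p-5)/4}$ is bounded by $r^{(p-5)/4}$ on the shell, so $h'(t)>0$ for $c$ small. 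Hence $h(t)\geq h(rc)$ there, and using the identity $(p-1)/4+(p+3)/4=(p+1)/2$ to simplify $(rc)^{(p-1)/4}c^{(p+3)/4}=r^{(p-1)/4}c^{(p+1)/2}$, I get
\[
\inf_{S(c)\cap(B(r)\setminus B(rc))}E\;\geq\;\Bigl(\tfrac r2-\lambda_0\Bigr)c-C_3\,c^{(p+1)/2}
\]
for a constant $C_3=C_3(r,p)>0$.

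Combining the two estimates, \eqref{E12} follows once $-\tfrac12\lambda_0 c<(\tfrac r2-\lambda_0)c-C_3 c^{(p+1)/2}$, that is, once $C_3\,c^{(p-1)/2}<\tfrac{r-\lambda_0}{2}$; since $(p-1)/2>0$ this holds for all small $c$, and this requirement defines $c^{\ast}(r)$. Taking $c$ small also ensures $c\leq r/\lambda_0$, so $S(c)\cap B(r)\neq\emptyset$ by Lemma \ref{L3}(i), completing the statement.

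I expect the delicate point to be the shell estimate rather than the test-function bound. Because both infima are $O(c)$, no crude bound separates them; the argument works only because the nonlinear contribution is genuinely of order $c^{(p+1)/2}$ with $(p+1)/2>1$, and because $h$ is monotone across the entire shell. The monotonicity is exactly where $p\geq5$ is used --- for $p<5$ the exponent $(p-5)/4$ is negative, $t^{(p-5)/4}$ blows up as $t\to rc$ with $c$ small, and $h$ need not increase, which is consistent with the different (global) variational structure of the subcritical regime. Once monotonicity and the higher-order nature of the correction are secured, the comparison of the leading coefficients $-\tfrac12\lambda_0$ and $\tfrac r2-\lambda_0$ closes the proof.
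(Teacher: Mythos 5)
Your argument is correct and is essentially the paper's own proof: both reduce the two infima, via the Gagliardo--Nirenberg estimate, to scalar comparison functions of $t=\|u\|_{G}^{2}$ (your $h$ versus the paper's $n_{c}$ and $m_{c}$), both obtain monotonicity from $p\geq 5$ through the bound $t^{(p-5)/4}\leq r^{(p-5)/4}$, both place the shell infimum at the left endpoint $t=rc$, and both close by observing that the nonlinear corrections are of order $c^{(p+1)/2}$, hence negligible against the $O(c)$ leading terms. The one substantive deviation is your upper bound on the left-hand side: you evaluate the explicit test function $\sqrt{c}\,\psi_{0}$, giving $\inf_{S(c)\cap B(rc/2)}E\leq -\tfrac{1}{2}\lambda_{0}c$, whereas the paper uses only $E(u)\leq \tfrac{1}{2}\|u\|_{G}^{2}-\lambda_{0}c$ on the small ball to get the bound $m_{c}(rc/2)=\tfrac{rc}{4}-\lambda_{0}c$. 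As a consequence, your closing condition $C_{3}c^{(p-1)/2}<\tfrac{r-\lambda_{0}}{2}$ silently requires $r>\lambda_{0}$ (for $r\leq \lambda_{0}$ no small $c$ satisfies it), while the paper's comparison \eqref{Ep} reads $\tfrac{r}{4}-o(1)>0$ and is free of this restriction, matching the lemma's claim of arbitrary $r>0$. This loss is immaterial in substance: your own parenthetical condition $rc/2\geq \lambda_{0}c$ for the inclusion $\sqrt{c}\,\psi_{0}\in B(rc/2)$ is equivalent to $r\geq 2\lambda_{0}$, independently of $c$, and by Lemma \ref{L3} i) applied with radius $rc/2$ the set $S(c)\cap B(rc/2)$ is empty precisely when $r<2\lambda_{0}$, so outside the range your comparison covers the left infimum is $+\infty$ and \eqref{E12} cannot hold anyway --- a nonemptiness point the paper's proof also needs and does not flag, while you at least record it.

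One correction to your closing commentary: monotonicity of $h$ on the shell does not in fact fail for $p<5$. In that case $\sup_{t\in(rc,r]}t^{(p-5)/4}=(rc)^{(p-5)/4}$, and $c^{(p+3)/4}(rc)^{(p-5)/4}=r^{(p-5)/4}c^{(p-1)/2}\to 0$ as $c\to 0$, so $h'(t)>0$ on $(rc,r]$ would survive for any $p>1$; what $p\geq 5$ buys (and what the paper actually proves) is positivity of $n_{c}'$ uniformly on all of $(0,r)$. This remark does not affect your proof, which is valid as written for $p\geq 5$.
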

\begin{proof}
From Lemma \ref{L3} i), we have that if $c\leq r/\lambda_{0}$, then $S(c)\cap B(r)\neq \emptyset$. By  Gagliardo-Nirenberg inequality \eqref{GN} we have
\begin{equation*}
\|u\|^{p+1}_{L^{p+1}}\leq C_{1}\|u\|^{\frac{p+3}{2}}_{L^{2}}\|u\|^{\frac{p-1}{2}}_{G}+C_{2}\|u\|^{p+1}_{L^{2}},
\end{equation*}
for positive constants $C_{1}$ and $C_{2}$. Now, if $u\in S(c)$,  it follows that
\begin{equation}\label{NLSKDV}
\begin{cases} 
 E(u)\geq \frac{1}{2}\|u\|^{2}_{G}-C_{1}c^{\frac{p+3}{4}}\|u\|^{\frac{p-1}{2}}_{G}-C_{2}c^{\frac{p+1}{2}}-\lambda_{0}c\\  
 E(u)\leq \frac{1}{2}\|u\|^{2}_{G}-\lambda_{0}c
\end{cases} 
\end{equation}
{
Set
\begin{equation*}
\begin{cases} 
n_{c}(s)=\frac{1}{2}s^{}-C_{1}c^{\frac{p+3}{4}}s^{1+\epsilon}-C_{2}c^{\frac{p+1}{2}}-\lambda_{0}c, \quad \text{with}\quad\epsilon=\frac{p-5}{4} \\
m_{c}(s)=\frac{1}{2}s^{}-\lambda_{0}c.
\end{cases} 
\end{equation*}
Notice that $\epsilon\geq 0$ because $p\geq 5$.
Next, it is clear that if there exists $c^{\ast}:=c^{\ast}(r)>0$ such that,
\begin{equation}\label{Ep1}
 m_{c}\left(\frac{rc}{2}\right)< \inf _{s\in (rc,r)}n_{c}(s)\quad \text{ for any $c<c^{\ast}(r)<<1$},
\end{equation}
then this implies \eqref{E12}. Note that $n_{c}\in C^{2}([0,\infty))$ and $n^{\prime}_{c}(s)=(\frac{1}{2}-C_{1}(1+\epsilon)c^{\frac{p+3}{4}}s^{\epsilon})>0$ for $s\in (0, r)$ and for $c<c^{\ast}_{1}(r)<<1$. Therefore,
\begin{equation*}
\inf _{s\in (rc,r)}n_{c}(s)=n_{c}(rc)=\frac{1}{2}(rc)-C_{1}c^{\frac{p+3}{4}}(rc)^{1+\epsilon}-C_{2}c^{\frac{p+1}{2}}-\lambda_{0}c.
\end{equation*}
Finally, since $p\geq 5$,
\begin{align}
\inf _{s\in (rc,r)}n_{c}(s)-m_{c}\left(\frac{rc}{2}\right)&=c\left(\frac{1}{4}r-C_{1}c^{\frac{p+3}{4}+\epsilon}r^{1+\epsilon}-C_{2}c^{\frac{p-1}{2}}\right)\nonumber \\
&>0 \quad \text{  for every $c<c^{\ast}_{2}(r)\leq c^{\ast}_{1}(r)$}.\label{Ep}
\end{align}
Combining \eqref{Ep1} with \eqref{Ep}, we obtain \eqref{E12}, if $c<c^{\ast}:=\min\left\{r/\lambda_{0}, c_{2}^{\ast}(r)\right\}$.
Lemma \ref{L3} is thus proved.}
\end{proof}

\begin{lemma} \label{L5}
Let $r>0$ and $c\leq r/\lambda_{0}$. Let $\left\{u_{n}\right\}\subset H^{1}(\Gamma)$ be a minimizing sequence for $\nu^{r}_{c}$. That is, 
\begin{equation*}
u_{n}\in S(c)\cap B(r), \quad \lim_{n\rightarrow\infty}E(u_{n})=\inf_{{u\in S(c)\cap B({r})}}E(u)=\nu^{r}_{c}.
\end{equation*}
Then
\begin{equation}\label{CT}
\liminf_{n\rightarrow\infty}\|u_{n}\|^{p+1}_{L^{p+1}}>0.
\end{equation}
\end{lemma}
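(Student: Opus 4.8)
The plan is to argue by contradiction, using a sharp test-function bound on $\nu^{r}_{c}$ together with the coercive lower bound $\|u\|^{2}_{G}\geq \lambda_{0}c$ that holds on $S(c)$. The essential observation is that the energy of a minimizing sequence cannot approach $\nu^{r}_{c}$ unless the $L^{p+1}$-mass stays bounded away from zero, because the $G$-norm alone can only push the energy down to $-\frac{1}{2}\lambda_{0}c$, whereas the true infimum lies strictly below this level.

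First I would record the exact value of the energy on the extremal direction $\sqrt{c}\,\psi_{0}$, where $\psi_{0}$ is the positive normalized ground state from Lemma \ref{L2}. Since $\|\sqrt{c}\,\psi_{0}\|^{2}_{G}=\lambda_{0}c\leq r$ (this is exactly the computation in the proof of Lemma \ref{L3}), the function $\sqrt{c}\,\psi_{0}$ is admissible, and a direct calculation gives
\begin{equation*}
\nu^{r}_{c}\leq E(\sqrt{c}\,\psi_{0})=-\tfrac{1}{2}\lambda_{0}c-\tfrac{1}{p+1}c^{\frac{p+1}{2}}\|\psi_{0}\|^{p+1}_{L^{p+1}}<-\tfrac{1}{2}\lambda_{0}c,
\end{equation*}
the strict inequality being valid because $\psi_{0}$ is strictly positive, hence $\|\psi_{0}\|^{p+1}_{L^{p+1}}>0$ (and finite by Gagliardo--Nirenberg \eqref{GN}). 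This strict gap is the crucial ingredient.

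Next I would establish the pointwise lower bound on the $G$-norm: for any $u\in S(c)$, the scaling property $\mathfrak{F}[u]=\|u\|^{2}_{L^{2}}\,\mathfrak{F}[u/\|u\|_{L^{2}}]\geq-\lambda_{0}\|u\|^{2}_{L^{2}}$ (using the definition of $-\lambda_{0}$ as the constrained infimum) yields $\|u\|^{2}_{G}=\mathfrak{F}[u]+2\lambda_{0}\|u\|^{2}_{L^{2}}\geq\lambda_{0}c$. Now suppose, for contradiction, that $\liminf_{n}\|u_{n}\|^{p+1}_{L^{p+1}}=0$; passing to a subsequence I may assume $\|u_{n}\|^{p+1}_{L^{p+1}}\to0$. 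Then
\begin{equation*}
E(u_{n})=\tfrac{1}{2}\|u_{n}\|^{2}_{G}-\lambda_{0}c-\tfrac{1}{p+1}\|u_{n}\|^{p+1}_{L^{p+1}}\geq\tfrac{1}{2}\lambda_{0}c-\lambda_{0}c-\tfrac{1}{p+1}\|u_{n}\|^{p+1}_{L^{p+1}},
\end{equation*}
so that $\nu^{r}_{c}=\lim_{n}E(u_{n})\geq-\tfrac{1}{2}\lambda_{0}c$. This contradicts the strict bound $\nu^{r}_{c}<-\tfrac{1}{2}\lambda_{0}c$ obtained above, which proves \eqref{CT}.

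I do not expect a serious obstacle here: the argument is self-contained once the two elementary bounds (the test-function upper bound from $\sqrt{c}\,\psi_{0}$ and the coercivity $\|u\|^{2}_{G}\geq\lambda_{0}c$) are in place. The only point requiring mild care is verifying that $\sqrt{c}\,\psi_{0}$ indeed lies in $B(r)$, i.e. that $c\leq r/\lambda_{0}$, which is guaranteed by the hypothesis $c\leq r/\lambda_{0}$ of the lemma and is precisely the admissibility condition already checked in Lemma \ref{L3}.
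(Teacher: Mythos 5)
Your proposal is correct and follows essentially the same route as the paper: the strict upper bound $\nu^{r}_{c}<-\tfrac{\lambda_{0}}{2}c$ via the admissible test function $\sqrt{c}\,\psi_{0}$, combined with the lower bound $\mathfrak{F}[u_{n}]\geq-\lambda_{0}c$ (which you phrase equivalently as $\|u_{n}\|^{2}_{G}\geq\lambda_{0}c$), yields the same contradiction when the $L^{p+1}$-norm vanishes along a subsequence. The only difference is cosmetic, namely writing the energy in terms of $\|\cdot\|_{G}$ rather than $\mathfrak{F}$ directly.
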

\begin{proof}
First, let us show that 
\begin{equation}\label{DE}
\nu^{r}_{c}=\inf_{{u\in S(c)\cap B({r})}}E(u)< -\frac{\lambda_{0}}{2}c.
\end{equation}
Indeed, we set $\psi_{c}=\sqrt{c}\psi_{0}$, where $\psi_{0}$  is defined in Lemma \ref{L2} and $c\leq r/\lambda_{0}$. It is clear that $\|\psi_{c}\|^{2}_{L^{2}}=c\|\psi_{0}\|^{2}_{L^{2}}=c$ and 
\begin{align*}
\|\psi_{c}\|^{2}_{G}&=c\mathfrak{F}[\psi_{0}]+2\lambda_{0}c\|\psi_{0}\|^{2}_{L^{2}}\\
&=-c\lambda_{0}+2\lambda_{0}c=c\lambda_{0}\leq r,
\end{align*}
thus $\psi_{c}\in S(c)\cap B(r)$ and 
\begin{align*}
\inf_{{u\in S(c)\cap B({r})}}E(u)&\leq E(\psi_{c})=\frac{c}{2}\mathfrak{F}[\psi_{0}]-\frac{1}{p+1}\|\psi_{c}\|^{p+1}_{L^{p+1}}\\
&=-\frac{\lambda_{0}}{2}c-\frac{1}{p+1}\|\psi_{c}\|^{p+1}_{L^{p+1}}<-\frac{\lambda_{0}}{2}c.
\end{align*}
On the other hand, assume by the absurd that $\liminf_{n\rightarrow\infty}\|u_{n}\|^{p+1}_{L^{p+1}}=0$. Then
\begin{equation*}
\nu^{r}_{c}=\lim_{n\rightarrow\infty}\frac{1}{2}\mathfrak{F}[u_{n}]\geq -\frac{\lambda_{0}}{2}c,
\end{equation*}
which is a contradiction since $\nu^{r}_{c}<-\frac{\lambda_{0}}{2}c$. This completes the proof of lemma.
\end{proof}

For any $y\in \Gamma$ and $t>0$, we denote by $B(y,t)$ the open ball of center $y$ and  radius $t$,
\begin{equation*}
B(y,t):=\left\{x\in \Gamma : d(x,y)<t\right\}.
\end{equation*}
Here,  $d(x, y)$ denotes the distance between two points of the graph (see \cite[Section 3]{CaFiNo2017}).

To each minimizing sequence $\left\{u_{n}\right\}_{n\in \mathbb{N}}\subset H^{1}(\Gamma)$ of $\nu^{r}_{c}$, we define the following
sequence of functions (L\'evy concentration functions) $M_{n}$,
\begin{equation*}
M_{n}(t)=\sup_{y\in B(y,t)} \|u_{n}\|_{L^{2}(B(y,t))}^{2}.
\end{equation*}
Let
\begin{equation*}
\tau=\lim_{t\rightarrow\infty}\lim_{n\rightarrow\infty} M_{n}(t).
\end{equation*}
Since $\|u_{n}\|^{2}_{L^{2}}=c$, it is clear that $0\leq \tau\leq c$. Concentration compactness lemma for starlike structures \cite{CaFiNo2017} shows
that there are three (mutually exclusive) possibilities for $\tau$,\\
{\it (i)} (Vanishing) $\tau=0$. Then, up to a subsequence, $\|u_{n}\|^{p+1}_{L^{p+1}}\rightarrow  0$ as $n\rightarrow \infty$ for all 
$2<p\leq \infty$.\\
{\it (ii)} (Dichotomy) $\tau\in (0,c)$. Then, there exist $\left\{v_{{k}}\right\}_{k\in \mathbb{N}}$, $\left\{w_{{k}}\right\}_{k\in \mathbb{N}}\subset H^{1}(\Gamma)$ such that
\begin{align}
 &\text{supp}\,v_{{k}} \cap \text{supp}\,w_{{k}}=\emptyset  \\
 &|v_{{k}}(x)| + |w_{{k}}(x)| \leq  |u_{{k}}(x)| \text{ for all $x\in \Gamma$} \\
 &\|v_{k}\|_{H^{1}}+ \|w_{k}\|_{H^{1}} \leq C  \|u_{n_{k}}\|_{H^{1}}          \\
 &\|v_{k}\|^{2}_{L^{2}}\rightarrow \tau \quad  \|w_{k}\|^{2}_{L^{2}}\rightarrow c-\tau          \\
 &\liminf_{k\rightarrow \infty}(\|u^{\prime}_{n_{k}}\|^{2}_{L^{2}}-\|v^{\prime}_{{k}}\|^{2}_{L^{2}}- \|w^{\prime}_{{k}}\|^{2}_{L^{2}})\geq 0 \\
 &\lim_{k\rightarrow \infty}(\|u_{n_{k}}\|^{p+1}_{L^{p+1}}-\|v_{{k}}\|^{p+1}_{L^{p+1}}- \|w_{{k}}\|^{p+1}_{L^{p+1}})= 0 \quad 2\leq p <\infty\\
 &\lim_{k\rightarrow \infty}\| |u_{n_{k}}|^2-|v_{{k}}|^2- |w_{{k}}|^2\|_{L^{\infty}}= 0.                         
\end{align}
{\it (iii)} (Compactness) $\tau=c$. Then, up to a subsequence,  at least one of the two following cases occurs,\\
(Convergence) There exists $u\in H^{1}(\Gamma)$ such that $u_{n}\rightarrow u$ in $L^{p}$ for all $2\leq p \leq \infty$.\\
(Runaway) There exists $e^{\ast}\in J^{ex}$, such that for any $t>0$ and $2\leq p \leq \infty$
\begin{equation}\label{RW}
\lim_{n\rightarrow\infty}\sum_{e\neq e^{\ast}}\left(\|(u_{n})_{e}\|^{p}_{L^{p}(I_{e})}+\|(u_{n})_{e^{\ast}}\|^{p}_{L^{p}((0,t))}\right)=0.
\end{equation}

Now we give the proof of Theorem \ref{TT1}.
\begin{proof}[\bf {Proof of Theorem \ref{TT1}}] The statement i) follows  from Lemma \ref{L3}.  
Next we prove statement ii) of theorem. Let $\left\{u_{n}\right\}$ be a minimizing sequence of  $\nu^{r}_{c}$, then $\left\{u_{n}\right\}$ is bounded in $H^{1}(\Gamma)$. Indeed, since $u_{n}\in S(c)\cap B(r)$, from Lemma \ref{L1} ii) we see easily that the sequence $\left\{u_{n}\right\}$ is bounded in $H^{1}(\Gamma)$.
Moreover, since $H^{1}(\Gamma)$ is a Hilbert space, there is $u\in H^{1}(\Gamma)$ such that, up to a subsequence, $u_{n}\rightharpoonup u$ in $H^{1}(\Gamma)$ and  $(u_{n})_{e}(x)$ converges to $(u)_{e}(x)$ a.e  $x\in I_{e}$, $e\in J$. Next we analyze separately the three possibilities: $\tau=0$, $0<\tau <c$ and $\tau=c$. 
From Lemma \ref{L5}, it follows that $\tau>0$. Therefore, the possibility of ``vanishing'' is ruled out. Now following the same argument as in the proof of Theorem 1 in \cite{CaFiNo2017} we can rule out the possibility of dichotomy; that is $\tau\notin (0,c)$. Thus, we see that $\tau=c$.

Next we prove that for $c<c^{\ast}$  the minimizing sequence  $\left\{u_{n}\right\}$ is not runaway. We argue by contradiction. Suppose that
$\left\{u_{n}\right\}$ is runaway. From \eqref{RW} we see that $\lim_{n\rightarrow \infty} \|(u_{n})_{e}\|^{p}_{L^{p}}=0$ for all $e\neq e^{\ast}$, Moreover, since $W_{-}\in L^{r}(\Gamma)$ with $1\leq r\leq1+2/(p-1)$, following the same ideas as in \cite[Theorem 1]{CaFiNo2017}, one can get that
\begin{equation*}
\lim_{n\rightarrow \infty}|u_{n}(v)|=0 \quad \text{for all $v\in V$ and} \quad  \lim_{n\rightarrow \infty}(u_{n}, W_{-}u_{n})=0.
\end{equation*}
Thus, we have
\begin{equation}\label{Ec1}
\lim_{n\rightarrow \infty}E(u_{n})\geq \lim_{n\rightarrow \infty}\left[\int^{\infty}_{0}|(u_{n})_{e^{\ast}}^{\prime}|^{2}dx-\frac{1}{p+1}\int^{\infty}_{0}|(u_{n})_{e^{\ast}}|^{p+1}dx\right].
\end{equation}
On the other hand, since $u_{n}\in S(c)\cap B(r)$, using the inequality of Gagliardo-Nirenberg it is not difficult to show that there exists a constant $K$ independent of $n$ such that 
\begin{equation}\label{Gc1}
\|u_{n}\|^{p+1}_{L^{p+1}}\leq Kr^{(p-1)/4}c^{(p+3)/4}.
\end{equation}
Therefore, combining \eqref{DE}, \eqref{Ec1} and \eqref{Gc1}, we obtain
\begin{equation*}
\inf_{{u\in S(c)\cap B({r})}}E(u)< -\frac{\lambda_{0}}{2}c<-Kr^{(p-1)/4}c^{(p+3)/4}\leq \lim_{n\rightarrow \infty}E(u_{n}),
\end{equation*}
for sufficiently small $c^{\ast}(r)<<1$ and $c<c^{\ast}(r)$, which is a contradiction and hence, every minimizing sequence for $\nu^{r}_{c}$ must be compact if $c<c^{\ast}(r)$. Thus, we have that  $u_{n}$
converges, up to taking subsequences, in $L^{p}$-norm to the function $u$ satisfying $\|u\|^{2}_{L^{2}}=c$. Consequently, from the weak convergence in $H^{1}(\Gamma)$, it follows that
\begin{equation*}
E(u)\leq \lim_{n\rightarrow \infty}E(u_{n})=\nu^{r}_{c}, \quad \text{and} \quad \|u\|^{2}_{G}\leq \liminf_{n\rightarrow \infty}\|u_{n}\|^{2}_{G}.
\end{equation*}
Thus, $u_{}\in S(c)\cap B(r)$ and $E(u)=\nu^{r}_{c}$. Now, if we assume that $\|u\|^{2}_{H^{1}}< \liminf_{n\rightarrow \infty}\|u_{n}\|^{2}_{H^{1}}$, then $E(u)<\nu^{r}_{c}$,  which is absurd, and thus we deduce that $u_{n}\rightarrow u$ strongly in $H^{1}(\Gamma)$.  This finishes the proof.



\end{proof}

\section{Proof of Theorem \ref{TT2}}
\label{S:3}

The aim of this section is to prove Theorem \ref{TT2}. First, we need the following lemma.

\begin{lemma} \label{L8}
Let $u\in H^{1}(\Gamma)$ be a solution of \eqref{EP}. Then, for every $e\in J$, the restriction $u_{e}:[0, L_{e})\rightarrow \mathbb{C}$ of $u$ to the $e$-th edge satisfies the following properties:
\begin{align}
& u_{e}\in H^{2}((0,L_{e}))\cap C^{2}((0,L_{e})),  \label{1} \\
& -u^{\prime\prime}_{e}+W_{e}u_{e}+\omega u_{e}-|u_{e}|^{p-1}u=0\, \quad \mbox{on}\quad (0,L_{e}),\label{2}\\
& \sum_{e\prec v}\partial_{o}u_{e}(v)=-\alpha_{v}u_{e}(v)\quad \text{ for all $v\in V$}\label{3},
\end{align}
where $W_{e}$ is the component of the potential $W$ on the edge $e$.
\end{lemma}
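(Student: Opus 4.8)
The plan is to read \eqref{EP} in its weak form and then run a one-dimensional elliptic bootstrap edge by edge, recovering the junction conditions at the end by integration by parts. By a solution $u\in H^{1}(\Gamma)$ of \eqref{EP} I understand that, denoting by $\mathfrak{F}[\cdot,\cdot]$ the bilinear form associated with $\mathfrak{F}$,
\begin{equation*}
\int_{\Gamma}u'\overline{\phi'}\,dx+\int_{\Gamma}Wu\,\overline{\phi}\,dx-\sum_{v\in V}\alpha_{v}u(v)\overline{\phi(v)}+\omega\int_{\Gamma}u\,\overline{\phi}\,dx=\int_{\Gamma}|u|^{p-1}u\,\overline{\phi}\,dx
\end{equation*}
for every $\phi\in H^{1}(\Gamma)$, that is, $\mathfrak{F}[u,\phi]+\omega(u,\phi)=(|u|^{p-1}u,\phi)$. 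First I would fix an edge $e$ and insert test functions $\phi\in C_{c}^{\infty}$ supported in the open interval $(0,L_{e})$, extended by $0$ to the rest of $\Gamma$. Then every vertex term and every contribution from the other edges drops out, and the identity collapses to
\begin{equation*}
\int_{0}^{L_{e}}u_{e}'\,\overline{\phi'}\,dx=\int_{0}^{L_{e}}\bigl(|u_{e}|^{p-1}u_{e}-W_{e}u_{e}-\omega u_{e}\bigr)\overline{\phi}\,dx,
\end{equation*}
which is exactly the statement that $-u_{e}''=|u_{e}|^{p-1}u_{e}-W_{e}u_{e}-\omega u_{e}=:f_{e}$ holds in $\mathcal{D}'((0,L_{e}))$; this is \eqref{2}.

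The regularity \eqref{1} I would obtain by bootstrapping. Since $I_{e}$ is an interval, $u_{e}\in H^{1}(I_{e})$ embeds into $C(\overline{I_{e}})\cap L^{\infty}(I_{e})$, so the nonlinear term $|u_{e}|^{p-1}u_{e}$ is continuous and bounded and $\omega u_{e}\in L^{2}$; the potential term $W_{e}u_{e}$ inherits the integrability of $W$ supplied by Assumption 2 (the same control already exploited in Lemma \ref{L1} ii)). Feeding this into $-u_{e}''=f_{e}$ and integrating twice first upgrades $u_{e}$ to $C^{1}$, and a second pass, now that $u_{e}$ and hence $f_{e}$ are more regular, yields $u_{e}\in H^{2}((0,L_{e}))\cap C^{2}((0,L_{e}))$. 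The decisive gain is that $u_{e}'$ now extends continuously up to the endpoints, so that the outward derivatives $\partial_{o}u_{e}(v)$ appearing in \eqref{3} are well defined.

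With \eqref{1}--\eqref{2} established, I would recover the vertex conditions \eqref{3} by returning to the weak identity with an arbitrary $\phi\in H^{1}(\Gamma)$ and integrating by parts on each edge, which is now legitimate because $u_{e}\in H^{2}(I_{e})$. On each edge the bulk integrand becomes $(-u_{e}''+W_{e}u_{e}+\omega u_{e}-|u_{e}|^{p-1}u_{e})\overline{\phi}$, which vanishes by \eqref{2}, and the boundary terms produced at the endpoints, together with the vertex sum $-\sum_{v}\alpha_{v}u(v)\overline{\phi(v)}$ already present in the weak formulation, combine into
\begin{equation*}
\sum_{v\in V}\Bigl(\sum_{e\prec v}\partial_{o}u_{e}(v)+\alpha_{v}u(v)\Bigr)\overline{\phi(v)}=0.
\end{equation*}
Since $\phi$ is continuous at each vertex and its values $\phi(v)$ may be prescribed independently, each bracket must vanish, which is precisely \eqref{3}.

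The step I expect to be delicate is the regularity bootstrap, because under Assumption 2 the negative part $W_{-}$ lies only in $L^{r}(\Gamma)$ with $r\leq 1+2/(p-1)$ (hence possibly well below $2$ when $p\geq5$) and the singular part of $W_{+}$ is only $L^{1}$, so $W_{e}u_{e}$ is a priori of low integrability. Tracking the integrability of $f_{e}$ carefully --- and in particular ensuring it remains good enough near each vertex for $u_{e}'$ to admit one-sided limits there --- is exactly what makes the outward derivatives in \eqref{3} meaningful; the remaining localization and integration-by-parts steps are routine.
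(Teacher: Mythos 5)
Your proposal is correct and follows essentially the same route as the paper: both read \eqref{EP} weakly, localize with test functions supported in a single edge to obtain the distributional equation \eqref{2}, bootstrap to the regularity \eqref{1}, and then recover the vertex conditions \eqref{3} from the weak identity --- the only difference being that the paper delegates this last ``standard argument'' to \cite[Theorem 4]{AQFF} and \cite[Lemma 4.1]{AA2017}, whereas you carry out the edgewise integration by parts and the independent choice of the values $\phi(v)$ explicitly. The integrability issue you flag (that $W_{e}u_{e}$ is a priori only $L^{1}$ under Assumption 2, so the right-hand side of the localized equation is not obviously in $L^{2}$) is genuine, but the paper's own proof glosses over it in exactly the same way by simply asserting the right side is in $L^{2}((0,L_{e}))$, so it does not separate your argument from the published one.
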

\begin{proof}
Fix $l\in J$. Statements \eqref{1} and \eqref{2} are derived from a standard bootstrap argument using test functions $\zeta\in C^{\infty}_{0}((0, L_{e}))$. 
Indeed, by \eqref{EP} applied with $\varphi=(\varphi_{e})_{e\in J}$, where $\varphi_{l}=\zeta u_{l}$ and $\varphi_{e}=0$ for $e\neq l$, we get
\begin{equation*}
-(\zeta u_{l})^{\prime\prime}+\omega \zeta u_{l}=-W_{e}\zeta u_{l}-\zeta u^{\prime\prime}_{l}-2\zeta^{\prime} u^{\prime}_{l}+\zeta |u_{l}|^{p-1}u_{l}
\end{equation*}
in the sense of distributions on $(0, L_{e})$. Now, since  the right side is in $L^{2}((0,L_{{e}}))$, it follows that $u_{l}\in H^{2}((0,L_{{e}}))$, and hence $u_{l}\in H^{2}((0,L_{{e}}))\cap C^{1}((0,L_{{e}}))$. A similar argument shows that $u_{l}\in  C^{2}((0,L_{{e}}))$.
In particular, statement \eqref{2} is true.
Finally, a standard argument shows that the solution $u$ of the stationary problem \eqref{EP}  is an element of the domain of the operator $H$, 
and it satisfies the boundary conditions \eqref{3} at the vertex $v$.  For more details see, for example \cite[Theorem 4]{AQFF} and \cite[Lemma 4.1]{AA2017}.
\end{proof}

\begin{proof}[\bf {Proof of Theorem \ref{TT2}}]
Our proof is inspired by \cite[Theorem 1 ]{BEBOJEVI2017} and \cite[Proposition 2]{CaFiNo2017}. First we remark that if $u\in\mathcal{M}^{r}_{c}$, then from Lemma \ref{L4} we have that $u\in B(rc)$, that is, $u$ stays away from the boundary of $S(c)\cap B(r)$. Therefore, we see that $u$ is a critical point of $E$ on $S(c)$ and  there exists a Lagrange multiplier $\omega\in \mathbb{R}$ such that 
\begin{equation}\label{Pl}
Hu+\omega u-|u|^{p-1}u=0.
\end{equation}
 Multiplying \eqref{Pl} with $\overline{u}$, and integrating over $\Gamma$ we obtain
\begin{align*}
-\omega \|u\|^{2}_{L^{2}}&=\mathfrak{F}[u]-\|u\|^{p+1}_{L^{p+1}}=2E(u)+\frac{2}{p+1}\|u\|^{p+1}_{L^{p+1}}-\|u\|^{p+1}_{L^{p+1}}\\
&=2E(u)+\frac{1-p}{p+1}\|u\|^{p+1}_{L^{p+1}}<2E(u).
\end{align*}
Thus, from \eqref{DE}, we see that 
\begin{equation*}
\omega>-\frac{2 E(u)}{c}>\lambda_{0}.
\end{equation*}
Next, since $u\in S(c)\cap B(r)$, by Lemma \ref{L1} ii) we see that there exists a constant $K>0$ such that
\begin{equation*}
\|u\|^{p+1}_{L^{p+1}}\leq K(c^{\frac{p+3}{4}}\|u\|^{\frac{p-1}{2}}_{G}+\lambda_{0}c^{\frac{p+1}{2}}).
\end{equation*}
Moreover, notice that
\begin{equation*}
\|u\|^{2}_{G}=\mathfrak{F}[u]+2\lambda_{0}\|u\|^{2}_{L^{2}}\geq   \lambda_{0}\|u\|^{2}_{L^{2}}=\lambda_{0}c,
\end{equation*}
which implies that
\begin{align*}
-\omega c&=\mathfrak{F}[u]-\|u\|^{p+1}_{L^{p+1}}=\|u\|^{2}_{G}-\|u\|^{p+1}_{L^{p+1}}-2\lambda_{0}c\\
&\geq \|u\|^{2}_{G}-Kc^{\frac{p+3}{4}}\|u\|^{\frac{p-1}{2}}_{G}-K\lambda_{0}c^{\frac{p+1}{2}}-2\lambda_{0}c\\
&=\|u\|^{2}_{G}(1-Kc^{\frac{p+3}{4}}\|u\|^{\frac{p-5}{2}}_{G})-K\lambda_{0}c^{\frac{p+1}{2}}-2\lambda_{0}c\\
&\geq \lambda_{0}c (1-Kc^{\frac{p+3}{4}}(rc)^{\frac{p-5}{4}})-K\lambda_{0}c^{\frac{p+1}{2}}-2\lambda_{0}c\\
&=\lambda_{0}c (-1-Kc^{\frac{p-1}{2}}r^{\frac{p-5}{4}}-Kc^{\frac{p-1}{2}}).
\end{align*}
It follows that
\begin{equation*}
\omega \leq \lambda_{0} (1+Kc^{\frac{p-1}{2}}+Kc^{\frac{p-1}{2}}r^{\frac{p-5}{4}})=\lambda_{0} (1+Kc^{\frac{p-1}{2}}(1+r^{\frac{p-5}{4}})).
\end{equation*}
Thus, since  $p\geq5$, we obtain the proof of i) of theorem. 

Let $u\in \mathcal{M}^{r}_{c}$ be a complex valued minimizer. Since $\||u|^{\prime}\|^{2}_{L^{2}}\leq \|u^{\prime}\|^{2}_{L^{2}}$, it follows that $|u|\in S(c)\cap B(r)$ and $E(|u|)\leq E(u)=\nu^{r}_{c}$. In particular, $|u|\in \mathcal{M}^{r}_{c}$ and $E(u)=E(|u|)$. This implies that 
\begin{equation}\label{CIT}
\sum_{e\in J}\int^{L_{{e}}}_{0}||u_{e}|^{\prime}(x)|^{2}dx=\sum_{e\in J}\int^{L_{{e}}}_{0}|u^{\prime}_{e}(x)|^{2}dx.
\end{equation}
Now we set $\psi:=|u|$. We claim that $\psi(x)>0$ for all $x\in\Gamma$. 

First, since $\psi\in \mathcal{M}^{r}_{c}$, we obtain that $\psi$ is a critical point of $E$ on $S(c)$. Therefore, there exists a Lagrange multiplier $\omega>\lambda_{0}$ such that 
\begin{equation*}
H\psi+\omega\psi-\psi^{p}=0. 
\end{equation*}
Notice that, by Lemma \ref{L8}, $\psi\in \text{dom}(H)$, $\psi_{e}\in H^{2}((0, L_{e}))\cap C^{2}((0, L_{e}))$ (here $L_{e}=+\infty$ if $e\in J^{ext}$) and  
\begin{equation*}
-\psi^{\prime\prime}_{e}+W_{e}\psi_{e}+\omega\psi_{e}-\psi_{e}^{p}=0, \quad \text{for all $x\in (0, L_{e})$ and $e\in J$.}
\end{equation*}
We recall that $W\leq 0$. We set $B(s):=\omega s-s^{p}$. Since $B\in C^{1}[0,+\infty)$, is nondecreasing for $s$ small, $B(0) = 0$ and $B(\omega^{1/(p-1)})= 0$, by \cite [Theorem 1]{LVL}, it follows that for every $e\in J$,  $\psi_{e}$ is either trivial or strictly positive on $(0, L_{e})$.

Secondly, if we suppose that $\psi_{e}(0)=\psi^{\prime}_{e}(0)=0$, then $\psi_{e}$ is trivial on $[0, L_{e}]$. Indeed, for some $\epsilon>0$, we define 
\begin{equation*}
\tilde{\psi}_{e}(x)=
\begin{cases} 
\psi_{e}(x) \quad \text{if $x\in [0, L_{e})$,}\\
0\quad \text{if $x\in (-\epsilon, 0)$.} 
\end{cases} 
\end{equation*}
Then, by Sobolev extension theorem, $\tilde{\psi}_{e}\in H^{2}((-\epsilon, L_{e}))$ and 
\begin{equation*}
-\tilde{\psi}^{\prime\prime}_{e}+\tilde{W}_{e}\tilde{\psi}_{e}+\omega\tilde{\psi}_{e}-\tilde{\psi}_{e}^{p}=0, \quad \text{for all $x\in (-\epsilon, L_{e})$,}
\end{equation*}
where the function $\tilde{W}_{e}$  is the extension by zero of ${W}_{e}$. Thus, by \cite [Theorem 1]{LVL} and applying the same argument as above, we have that $\tilde{\psi}_{e}=0$ on $(-\epsilon, L_{e})$. Analogously we may consider the case when $\psi_{e}(L_{e})=\psi^{\prime}_{e}(L_{e})=0$.

Finally, we prove $\psi>0$. To begin assume that $\psi(v)=0$ for some $v\in V$. Since $\psi\in \text{dom}(H)$, it follows that $\sum_{v\prec e}\partial_{o}\psi_{e}(v)=0$. Without loss of generality, we can assume that the vertex $v$ coincides with $x=0$. Notice that $\psi^{\prime}_{e}(0)\geq 0$. Indeed, this follows from the fact that $\psi_{e}\in C^{1}([0,L_{e}))$, $\psi_{e}\geq 0$ and $\psi_{e}(0)=0$. Thus we obtain $\psi_{e}(0)=\psi^{\prime}_{e}(0)=0$ due to the boundary conditions at the vertex. Therefore $\psi_{e}=0$ on $(0, L_{e})$ for all $e\prec 0$ and by continuity $\psi=0$ on $\Gamma$, which is a contradiction because $\psi\in \mathcal{M}^{r}_{c}$. This contradiction shows that the supposition is false, and so $\psi_{e}(v)>0$ for all $v\in V$. Therefore, $\psi_{e}(x)>0$ on $[0, L_{e})$ for all $e\prec v$  and, hence, by continuity 
$\psi>0$ on $\Gamma$. This proves the claim.

On the other hand, we can write $u_{e}(x)=\psi_{e}(x)z_{e}(x)$ where $\psi_{e}$, $z_{e}\in C^{1}(0,L_{e})$ and $|u_{e}|=\psi_{e}>0$. Since $|z_{e}|=1$, it follows that 
\begin{equation*}
u^{\prime}_{e}=\psi^{\prime}_{e}z_{e}+\psi_{e}z^{\prime}_{e}=z_{e}(\psi^{\prime}_{e}+\psi_{e}\overline{z_{e}}z^{\prime}_{e}).
\end{equation*}
Notice that $\text{Re}(\overline{z_{e}}z^{\prime}_{e})=0$. This implies that  $|u^{\prime}_{e}|^{2}=|\psi^{\prime}_{e}|^{2}+|\psi_{e}z^{\prime}_{e}|^{2}$. Thus from \eqref{CIT} we see that 
\begin{equation*}
\sum_{e\in J}\int^{L_{{e}}}_{0}|\psi_{e}^{\prime}|^{2}dx=\sum_{e\in J}\int^{L_{{e}}}_{0}|u^{\prime}_{e}|^{2}dx=\sum_{e\in J}\int^{L_{{e}}}_{0}|\psi^{\prime}_{e}|^{2}+\sum_{e\in J}\int^{L_{{e}}}_{0}|\psi_{e}\, z^{\prime}_{e}|^{2}.
\end{equation*}
Using that $\psi_{e}>0$, we obtain that $z^{\prime}_{e}=0$ for every  $e\in J$. Since $z_{e}\in C^{1}(0,L_{e})$, we get $z_{e}(x)=e^{i\theta_{e}}$ on $(0, L_{e})$ with $\theta_{e}=\text{constant}$. Finally, by continuity at the vertex we have that $\theta_{e}=\theta=\text{constant}$ for all  $e\in J$, and this completes the proof of theorem.

\end{proof}

\section{Stability of standing waves}
\label{S:5}

This section is devoted to the proof of  Corollaries \ref{TT33} and \ref{C11}.

\begin{proof}[\bf {Proof of Corollary \ref{TT33}}]
We prove that the set $\mathcal{M}^{r}_{c}$ is  $H^{1}(\Gamma)$-stable with respect to NLS \eqref{00NL}. We argue by contradiction. Suppose that the set $\mathcal{M}^{r}_{c}$ is not stable under flow associated with \eqref{00NL}. Then there exist $\epsilon>0$, a sequence  $\left\{u_{n, 0}\right\}_{n\in \mathbb{N}}\subset H^{1}(\Gamma)$ such that
\begin{equation}\label{Nm}
\inf_{\varphi\in\mathcal{M}^{r}_{c}}\|u_{n, 0}-\varphi\|_{H^{1}}<\frac{1}{n}
\end{equation}
and $\left\{t_{n}\right\}\subset \mathbb{R}^{+}$ such that
\begin{equation}\label{Ctw}
\inf_{\varphi\in \mathcal{M}^{r}_{c}}\|u_{n}(t_{n})-\varphi\|_{H^{1}}\geq \epsilon \quad \text{for all $n\in \mathbb{N}$},
\end{equation}
where $u_{n}$ is the unique solution of the Cauchy problem \eqref{00NL} with initial data $u_{n, 0}$. Without loss of generality, we may assume that $\|u_{n, 0}\|^{2}_{L^{2}}=c$. From \eqref{Nm} and conservation laws, we obtain
\begin{align*}
E(u_{n}(t_{n}))&=E(u_{n, 0})\rightarrow \nu^{r}_{c} \quad \text{as $n\rightarrow\infty$,}\\
\|u_{n}(t_{n})\|^{2}_{L^{2}}&=\|u_{n, 0}\|^{2}_{L^{2}}=c\quad \text{ for all $n$.}
\end{align*}
We note that there exists a subsequence  $\left\{u_{n_{k}}(t_{n_{k}})\right\}$ of  $\left\{u_{n}(t_{n})\right\}$ such that $\|u_{n_{k}}(t_{n_{k}})\|^{2}_{G}\leq r$. Indeed, if $\|u_{n}(t_{n})\|^{2}_{G}> r$  for every $n$ sufficiently large, then by continuity there exists $t^{\ast}_{n}\in (0,t_{n})$ such that $\|u_{n}(t^{\ast}_{n})\|^{2}_{G}=r$. Moreover $\|u_{n}(t^{\ast}_{n})\|^{2}_{L^{2}}=c$ and $E(u_{n}(t^{\ast}_{n}))\rightarrow \nu^{r}_{c}$ as $n\rightarrow \infty$. Therefore, $\left\{u_{n}(t^{\ast}_{n})\right\}_{n\in \mathbb{N}}$ is a minimizing sequence  of $\nu^{r}_{c}$. By Theorem \ref{TT1}, we see that there exists $u^{\ast}\in H^{1}(\Gamma)$ such that $u_{n}(t^{\ast}_{n})\rightarrow u^{\ast}$ strong in $H^{1}(\Gamma)$. In particular,  $E(u^{\ast})=\nu^{r}_{c}$, $\|u^{\ast}\|^{2}_{G}=r$ and $\|u^{\ast}\|^{2}_{L^{2}}=c$, which is a contradiction, because from Lemma \ref{L4} the critical points of $E$  on $S(c)$ do not belong to the boundary of $S(c)\cap B(r)$. 

In conclusion, $\|u_{n_{k}}(t_{n_{k}})\|^{2}_{G}\leq r$, $\|u_{n_{k}}(t_{n_{k}})\|^{2}_{L^{2}}=c$ and $E(u_{n_{k}}(t_{n_{k}}))\rightarrow \nu^{r}_{c}$; that is, $\left\{u_{n_{k}}(t_{n_{k}})\right\}_{k\in \mathbb{N}}$ is a minimizing sequence for $\nu^{r}_{c}$.  Thus, by Theorem \ref{TT1}, up to a subsequence, there exists  a function $\varphi\in\mathcal{M}^{r}_{c}$ such that
\begin{equation*}
\|u_{n_{k}}(t_{n_{k}})-\varphi\|_{H^{1}}\rightarrow 0, \quad \text{as $k\rightarrow\infty$,}
\end{equation*}
which is a contradiction with \eqref{Ctw}.
\end{proof}

\begin{proof}[\bf {Proof of Corollary \ref{C11}}]
Let $r>0$ and $c^{\ast}>0$ be as in Theorem \ref{TT1}.  Notice that $\mathcal{M}^{r}_{c}\neq \emptyset$ for $c< c^{\ast}$.
Let $\psi\in \mathcal{M}^{r}_{c}$. From Theorem \ref{TT2} i)-ii), there exists $\omega>\gamma^{2}/N^{2}$ such that 
\begin{equation*}
H\psi+\omega\psi-|\psi|^{p-1}\psi=0, 
\end{equation*}
and $\omega\rightarrow \gamma^{2}/N^{2}$ as $c\rightarrow 0$.

From Theorem 4 in \cite{AQFF} we have that $$\emptyset\neq\mathcal{M}^{r}_{c}\subset \left\{e^{i\theta}\phi_{\omega,j}: \theta\in \mathbb{R}, j=0, 1, \ldots, [(N-1)/2] \right\},$$ where $\phi_{\omega,j}$ is defined in \eqref{DF1}. We recall that $\phi_{\omega,j}$ is defined for $\omega>{\gamma^{2}}/(N-2j)^{2}$.

Notice that if $c< c^{\ast}$ is sufficiently small, then $\phi_{\omega,j}\notin \mathcal{M}^{r}_{c}$ for $j\geq1$. Indeed, since $\omega\rightarrow \gamma^{2}/N^{2}$ as $c\rightarrow 0$, by taking $c$ sufficiently small we have that $\omega< {\gamma^{2}}/(N-2j)^{2}$ for every $j\geq 1$. That is, there exists $c^{\ast}_{1}$ such that if $c<c^{\ast}_{1}$, then  $\phi_{\omega,j}\notin \mathcal{M}^{r}_{c}$ for $j=1$, $2$, $\ldots$, $[(N-1)/2]$.

On the other hand, we define the function $R(\omega):=\|\phi_{\omega,0}\|^{2}_{L^{2}}$ for $\omega> \gamma^{2}/N^{2}$,  i.e.,
\begin{align*}
R(\omega)&=\frac{2N}{(p-1)}\left(\frac{p+1}{2}\right)^{\frac{2}{p-1}}\omega^{\frac{5-p}{2(p-1)}}\,h\left({\frac{\gamma}{N\omega^{1/2}}}\right),\\
& \text{where} \quad h(x)=\int^{1}_{x}(1-t^{2})^{\frac{3-p}{p-1}}dt.
\end{align*}
Notice that $R(\gamma^{2}/N^{2})=0$.  Moreover, there exists $\omega_{1}^{\ast}>0$ such that  $R^{\prime}(\omega)>0$ for all $\omega\in (\gamma^{2}/N^{2}, \omega_{1}^{\ast})$ (see \cite[Remark 6.1]{AQFF}). This implies that  there exists $c^{\ast}_{2}$ such that $\|\phi_{\omega(c),0}\|^{2}_{L^{2}}=c$ for $0<c\leq c^{\ast}_{2}$ and $\omega(c)\in (\gamma^{2}/N^{2}, \omega_{1}^{\ast})$. 
Without loss of generality we can assume that $r\geq \|\phi_{\omega(c),0}\|^{2}_{L^{2}}$ for $\omega(c)\in (\gamma^{2}/N^{2}, \omega_{1}^{\ast})$. Set $c_{3}^{\ast}:=\min\left\{{c_{1}^{\ast}, c^{\ast}_{2}}\right\}$. It is clear that if $c<c_{3}^{\ast}$, then $\mathcal{M}^{r}_{c}= \left\{e^{i\theta}\phi_{\omega(c),0}: \theta\in \mathbb{R}\right\}$. Therefore, the statement of Corollary \ref{C11} follows from Corollary \ref{TT33}, and this completes the proof.
\end{proof}

\section*{Acknowledgements}
The author wishes to express their sincere thanks to the referees for his/her valuable comments that help to improve the main results of this paper.

\end{document}